\numberwithin{equation}{section}
\theoremstyle{plain}
\newtheorem{theorem}{Theorem}[section]
\newtheorem{proposition}[theorem]{Proposition}
\newtheorem{definition}[theorem]{Definition}
\newtheorem{corollary}[theorem]{Corollary}
\theoremstyle{definition}
\newtheorem{remark}[theorem]{Remark}
\newtheorem{problem}[theorem]{Problem}
\begin{document}

\newcommand{\eq}{equation}
\newcommand{\real}{\ensuremath{\mathbb R}}
\newcommand{\comp}{\ensuremath{\mathbb C}}
\newcommand{\rn}{\ensuremath{{\mathbb R}^n}}
\newcommand{\tn}{\ensuremath{{\mathbb T}^n}}
\newcommand{\rnp}{\ensuremath{\real^n_+}}
\newcommand{\rnn}{\ensuremath{\real^n_-}}
\newcommand{\Rn}{\ensuremath{{\mathbb R}^{n-1}}}
\newcommand{\Zn}{\ensuremath{{\mathbb Z}^{n-1}}}
\newcommand{\no}{\ensuremath{\nat_0}}
\newcommand{\ganz}{\ensuremath{\mathbb Z}}
\newcommand{\zn}{\ensuremath{{\mathbb Z}^n}}
\newcommand{\zom}{\ensuremath{{\mathbb Z}_{\Om}}}
\newcommand{\zOm}{\ensuremath{{\mathbb Z}^{\Om}}}
\newcommand{\As}{\ensuremath{A^s_{p,q}}}
\newcommand{\Bs}{\ensuremath{B^s_{p,q}}}
\newcommand{\Fs}{\ensuremath{F^s_{p,q}}}
\newcommand{\Fsr}{\ensuremath{F^{s,\rloc}_{p,q}}}
\newcommand{\nat}{\ensuremath{\mathbb N}}
\newcommand{\Om}{\ensuremath{\Omega}}
\newcommand{\di}{\ensuremath{{\mathrm d}}}
\newcommand{\sn}{\ensuremath{{\mathbb S}^{n-1}}}
\newcommand{\Ac}{\ensuremath{\mathcal A}}
\newcommand{\Acs}{\ensuremath{\Ac^s_{p,q}}}
\newcommand{\Bc}{\ensuremath{\mathcal B}}
\newcommand{\Cc}{\ensuremath{\mathcal C}}
\newcommand{\cc}{{\scriptsize $\Cc$}${}^s (\rn)$}
\newcommand{\ccd}{{\scriptsize $\Cc$}${}^s (\rn, \delta)$}
\newcommand{\Fc}{\ensuremath{\mathcal F}}
\newcommand{\Lc}{\ensuremath{\mathcal L}}
\newcommand{\Mc}{\ensuremath{\mathcal M}}
\newcommand{\Ec}{\ensuremath{\mathcal E}}
\newcommand{\Pc}{\ensuremath{\mathcal P}}
\newcommand{\Efr}{\ensuremath{\mathfrak E}}
\newcommand{\Mfr}{\ensuremath{\mathfrak M}}
\newcommand{\Mbf}{\ensuremath{\mathbf M}}
\newcommand{\Dbb}{\ensuremath{\mathbb D}}
\newcommand{\Lbb}{\ensuremath{\mathbb L}}
\newcommand{\Pbb}{\ensuremath{\mathbb P}}
\newcommand{\Qbb}{\ensuremath{\mathbb Q}}
\newcommand{\Rbb}{\ensuremath{\mathbb R}}
\newcommand{\vp}{\ensuremath{\varphi}}
\newcommand{\hra}{\ensuremath{\hookrightarrow}}
\newcommand{\supp}{\ensuremath{\mathrm{supp \,}}}
\newcommand{\ssupp}{\ensuremath{\mathrm{sing \ supp\,}}}
\newcommand{\dist}{\ensuremath{\mathrm{dist \,}}}
\newcommand{\unif}{\ensuremath{\mathrm{unif}}}
\newcommand{\ve}{\ensuremath{\varepsilon}}
\newcommand{\vk}{\ensuremath{\varkappa}}
\newcommand{\vr}{\ensuremath{\varrho}}
\newcommand{\pa}{\ensuremath{\partial}}
\newcommand{\oa}{\ensuremath{\overline{a}}}
\newcommand{\ob}{\ensuremath{\overline{b}}}
\newcommand{\of}{\ensuremath{\overline{f}}}
\newcommand{\LA}{\ensuremath{L^r\!\As}}
\newcommand{\LcA}{\ensuremath{\Lc^{r}\!A^s_{p,q}}}
\newcommand{\LcdA}{\ensuremath{\Lc^{r}\!A^{s+d}_{p,q}}}
\newcommand{\LcB}{\ensuremath{\Lc^{r}\!B^s_{p,q}}}
\newcommand{\LcF}{\ensuremath{\Lc^{r}\!F^s_{p,q}}}
\newcommand{\Lf}{\ensuremath{L^r\!f^s_{p,q}}}
\newcommand{\La}{\ensuremath{\Lambda}}
\newcommand{\Lob}{\ensuremath{L^r \ob{}^s_{p,q}}}
\newcommand{\Lof}{\ensuremath{L^r \of{}^s_{p,q}}}
\newcommand{\Loa}{\ensuremath{L^r\, \oa{}^s_{p,q}}}
\newcommand{\Lcoa}{\ensuremath{\Lc^{r}\oa{}^s_{p,q}}}
\newcommand{\Lcob}{\ensuremath{\Lc^{r}\ob{}^s_{p,q}}}
\newcommand{\Lcof}{\ensuremath{\Lc^{r}\of{}^s_{p,q}}}
\newcommand{\Lca}{\ensuremath{\Lc^{r}\!a^s_{p,q}}}
\newcommand{\Lcb}{\ensuremath{\Lc^{r}\!b^s_{p,q}}}
\newcommand{\Lcf}{\ensuremath{\Lc^{r}\!f^s_{p,q}}}
\newcommand{\id}{\ensuremath{\mathrm{id}}}
\newcommand{\tr}{\ensuremath{\mathrm{tr\,}}}
\newcommand{\trd}{\ensuremath{\mathrm{tr}_d}}
\newcommand{\trL}{\ensuremath{\mathrm{tr}_L}}
\newcommand{\ext}{\ensuremath{\mathrm{ext}}}
\newcommand{\re}{\ensuremath{\mathrm{re\,}}}
\newcommand{\Rea}{\ensuremath{\mathrm{Re\,}}}
\newcommand{\Ima}{\ensuremath{\mathrm{Im\,}}}
\newcommand{\loc}{\ensuremath{\mathrm{loc}}}
\newcommand{\rloc}{\ensuremath{\mathrm{rloc}}}
\newcommand{\osc}{\ensuremath{\mathrm{osc}}}
\newcommand{\pr}{\pageref}
\newcommand{\wh}{\ensuremath{\widehat}}
\newcommand{\wt}{\ensuremath{\widetilde}}
\newcommand{\ol}{\ensuremath{\overline}}
\newcommand{\os}{\ensuremath{\overset}}
\newcommand{\Li}{\ensuremath{\overset{\circ}{L}}}
\newcommand{\Ai}{\ensuremath{\os{\, \circ}{A}}}
\newcommand{\Ci}{\ensuremath{\os{\circ}{\Cc}}}
\newcommand{\dom}{\ensuremath{\mathrm{dom \,}}}
\newcommand{\SA}{\ensuremath{S^r_{p,q} A}}
\newcommand{\SB}{\ensuremath{S^r_{p,q} B}}
\newcommand{\SF}{\ensuremath{S^r_{p,q} F}}
\newcommand{\Hc}{\ensuremath{\mathcal H}}
\newcommand{\Nc}{\ensuremath{\mathcal N}}
\newcommand{\Lci}{\ensuremath{\overset{\circ}{\Lc}}}
\newcommand{\bmo}{\ensuremath{\mathrm{bmo}}}
\newcommand{\BMO}{\ensuremath{\mathrm{BMO}}}
\newcommand{\cm}{\\[0.1cm]}
\newcommand{\Aa}{\ensuremath{\os{\, \ast}{A}}}
\newcommand{\Ba}{\ensuremath{\os{\, \ast}{B}}}
\newcommand{\Fa}{\ensuremath{\os{\, \ast}{F}}}
\newcommand{\Aas}{\ensuremath{\Aa{}^s_{p,q}}}
\newcommand{\Bas}{\ensuremath{\Ba{}^s_{p,q}}}
\newcommand{\Fas}{\ensuremath{\Fa{}^s_{p,q}}}
\newcommand{\Ca}{\ensuremath{\os{\, \ast}{{\mathcal C}}}}
\newcommand{\Cas}{\ensuremath{\Ca{}^s}}
\newcommand{\Car}{\ensuremath{\Ca{}^r}}
\newcommand{\bl}{$\blacksquare$}

\begin{center}
{\Large Mapping properties of Fourier transforms}
\\[1cm]
{Hans Triebel}
\\[0.2cm]
Institut f\"{u}r Mathematik\\
Friedrich--Schiller--Universit\"{a}t Jena\\
07737 Jena, Germany
\\[0.1cm]
email: hans.triebel@uni-jena.de
\end{center}

\begin{abstract}
The paper deals with continuous and compact mappings generated by the Fourier transform between distinguished function spaces on \rn.
The degree of compactness will be measured in terms of related entropy numbers. We are more interested in the interplay of already 
available ingredients than in generality.
\end{abstract}

{\bfseries Keywords:} Fourier transform, Besov spaces, entropy numbers

{\bfseries 2020 MSC:} Primary 46E35, Secondary 41A46, 47B06

\section{Introduction}  \label{S1}
Let $F$,
\begin{\eq}   \label{1.1}
\big(F \vp\big)(\xi) = (2 \pi)^{-n/2} \int_{\rn} e^{-i x \xi} \, \vp (x) \, \di x, \qquad \vp \in S(\rn), \quad \xi \in \rn,
\end{\eq}
be the classical Fourier transform, extended  in the usual way to $S'(\rn)$, $n\in \nat$. The mapping properties
\begin{\eq}   \label{1.2}
FS(\rn) = S(\rn), \qquad FS'(\rn) = S'(\rn),
\end{\eq}
and
\begin{\eq}   \label{1.3}
F: \ \ L_p (\rn) \hra L_{p'} (\rn), \quad  1\le p \le 2, \quad \frac{1}{p} + \frac{1}{p'} =1, \quad FL_2 (\rn) = L_2 (\rn),
\end{\eq}
are cornerstones of Fourier Analysis. Let, as illustrated in Figure 1 below,
\begin{\eq}   \label{1.4}
X^s_p (\rn) =
\begin{cases}
L_p (\rn) &\text{if $2\le p<\infty$, $s=0$}, \\
B^s_p (\rn) & \text{if $2 \le p <\infty$, $s>0$}, \\
B^s_p (\rn) &\text{if $1<p \le 2$, $s \ge d^n_p$},
\end{cases}
\end{\eq}
and
\begin{\eq}   \label{1.5}
Y^s_p (\rn) =
\begin{cases}
B^s_p (\rn) &\text{if $2 \le p <\infty$, $s \le d^n_p$}, \\
B^s_p (\rn) &\text{if $1<p \le 2$, $s<0$}, \\
L_p (\rn) &\text{if $1<p\le 2$, $s=0$},
\end{cases}
\end{\eq}
where
\begin{\eq}   \label{1.6}
B^s_p (\rn) = B^s_{p,p} (\rn), \quad s\in \real, \quad 0<p\le \infty, \quad B^s_\infty (\rn) = \Cc^s (\rn),
\end{\eq}
are distinguished Besov spaces (H\"{o}lder--Zygmund spaces if $p=\infty$) and
\begin{\eq}   \label{1.7}
d^n_p =2n \big( \frac{1}{p} - \frac{1}{2} \big), \qquad 0<p\le \infty, \quad n\in \nat.
\end{\eq}
It is our first aim to justify the continuous mapping
\begin{\eq}  \label{1.8}
F: \quad X^{s_1}_p (\rn) \hra Y^{s_2}_p (\rn)
\end{\eq}
and that for fixed $p$ the restrictions for $s_1$ and $s_2$ are natural (and not an artefact of the underlying methods). In particular
this mapping is compact if, and only if, neither $X^{s_1}_p (\rn)$ is limiting (this means $s_1=0$ for $2\le p<\infty$ or $s_1 = d^n_p$ for
$1<p\le 2$) nor $Y^{s_2}_p (\rn)$ is limiting (this means $s_2 = d^n_p$ for $2\le p <\infty$ or $s_2 =0$ for $1<p \le 2$). It is our 
second aim to discuss the degree of compactness in terms of related entropy numbers. 

Section \ref{S2} deals with definitions and some basic assertions. The outcome in Theorem \ref{T2.5} might be of some self--contained
interest. But otherwise it serves mainly as a tool for what follows afterwards. Continuous mappings of type \eqref{1.8} are
considered  in Section \ref{S3}. Related compactness properties are treated in Section \ref{S4}. The Theorems \ref{T3.2} and \ref{T4.8}
are the main assertions of this paper. In Section \ref{S5} we collect some problems which might be of interest for future research.

In a further paper  we combine what has been obtained so far with (more or less known) mapping properties of pseudo--differential 
operators of H\"{o}rmander type $S^\sigma_{1, \delta} (\rn)$, $0\le \delta \le 1$, resulting in a spectral theory for related compact
operators. Otherwise we are not aware  whether mappings of $F$ between (unweighted) function spaces beyond the very classical 
assertions in \eqref{1.3} have already been studied in the literature. The only exception known to us is \cite{Ryd20} which will be
commented below. If weights are admitted, especially of power type $|x|^\lambda$, $x\in \rn$, $\lambda \in \real$, then continuous 
mappings of $F$ between weighted $L_p$--spaces, are a matter of so--called Pitt inequalities. A brief survey of what in known including
adequate references, may be found in the recent paper \cite{DoV21} dealing with related vector--valued extensions. But this is not our
topic. However we suggest in Problem \ref{P5.4} to have a closer look at weighted spaces.

\section{Definitions and basic assertions}    \label{S2}
\subsection{Definitions}   \label{S2.1}
We use standard notation. Let $\nat$ be the collection of all natural numbers and $\no = \nat \cup \{0 \}$. Let $\rn$ be Euclidean $n$-space where
$n\in \nat$. Put $\real = \real^1$.
 Let $S(\rn)$ be the Schwartz space of all complex-valued rapidly decreasing infinitely differentiable functions on $\rn$ and let $S' (\rn)$ be the dual space of all tempered distributions on \rn.
Furthermore, $L_p (\rn)$ with $0< p \le \infty$, is the standard complex quasi-Banach space with respect to the Lebesgue measure, quasi-normed by
\begin{\eq}   \label{2.1}
\| f \, | L_p (\rn) \| = \Big( \int_{\rn} |f(x)|^p \, \di x \Big)^{1/p}
\end{\eq}
with the obvious modification if $p=\infty$.  
As usual, $\ganz$ is the collection of all integers; and $\zn$ where $n\in \nat$ denotes the
lattice of all points $m= (m_1, \ldots, m_n) \in \rn$ with $m_k \in \ganz$. 

 If $\vp \in S(\rn)$ then
\begin{\eq}  \label{2.2}
\wh{\vp} (\xi) = (F \vp)(\xi) = (2\pi )^{-n/2} \int_{\rn} e^{-ix \xi} \vp (x) \, \di x, \qquad \xi \in  \rn,
\end{\eq}
denotes the Fourier transform of \vp. As usual, $F^{-1} \vp$ and $\vp^\vee$ stand for the inverse Fourier transform, given by the right-hand side of
\eqref{2.2} with $i$ in place of $-i$. Here $x \xi$ stands for the scalar product in \rn. Both $F$ and $F^{-1}$ are extended to $S'(\rn)$ in the
standard way. Let $\vp_0 \in S(\rn)$ with
\begin{\eq}   \label{2.3}
\vp_0 (x) =1 \ \text{if $|x|\le 1$} \quad \text{and} \quad \vp_0 (x) =0 \ \text{if $|x| \ge 3/2$},
\end{\eq}
and let
\begin{\eq}   \label{2.4}
\vp_k (x) = \vp_0 (2^{-k} x) - \vp_0 (2^{-k+1} x ), \qquad x\in \rn, \quad k\in \nat.
\end{\eq}
Since
\begin{\eq}   \label{2.5}
\sum^\infty_{j=0} \vp_j (x) =1 \qquad \text{for} \quad x\in \rn,
\end{\eq}
$\vp =\{ \vp_j \}^\infty_{j=0}$ forms a dyadic resolution of unity. The entire analytic functions $(\vp_j \wh{f} )^\vee (x)$ make sense pointwise in $\rn$ for any $f\in S'(\rn)$. 

\begin{definition}   \label{D2.1}
Let $\vp = \{ \vp_j \}^\infty_{j=0}$ be the above dyadic resolution  of unity. Let
\begin{\eq}   \label{2.6}
s\in \real \qquad \text{and} \qquad 0<p,q \le \infty.
\end{\eq}
Then $\Bs (\rn)$ is the collection of all $f \in S' (\rn)$ such that
\begin{\eq}   \label{2.7}
\| f \, | \Bs (\rn) \|_{\vp} = \Big( \sum^\infty_{j=0} 2^{jsq} \big\| (\vp_j \widehat{f})^\vee \, | L_p (\rn) \big\|^q \Big)^{1/q}
\end{\eq}
is finite $($with the usual modification if $q= \infty)$. 
\end{definition}

\begin{remark}   \label{R2.2}
These are the well--known inhomogeneous Besov spaces. They are independent of the above resolution of unity $\vp$ according to 
\eqref{2.3}--\eqref{2.5} (equivalent quasi--norms). This justifies the omission of the subscript $\vp$ in \eqref{2.7} in the sequel.
In \cite{T20} one finds (historical) references, explanations and discussions. We do not need their counterparts $\Fs (\rn)$ in this
paper. They will be mentioned occasionally by passing. We rely mainly on the special cases
\begin{\eq}   \label{2.8}
B^s_p (\rn) = B^s_{p,p} (\rn) \qquad 0<p \le \infty, \quad s\in \real,
\end{\eq}
\begin{\eq}   \label{2.9}
\Cc^s (\rn) = B^s_\infty (\rn), \qquad \text{and} \qquad H^s (\rn) = B^s_2 (\rn), \qquad s\in \real.
\end{\eq}
\end{remark}

As already indicated in the Introduction the mapping theory for the Fourier transform as developed in this paper is governed by the
{\em source spaces} $X^s_p (\rn)$ in \eqref{1.4} and the {\em target spaces} $Y^s_p (\rn)$ in \eqref{1.5}. We repeat and complement
these definitions. Let
\begin{\eq}  \label{2.10}
\tau^{n+}_p = \max (0, d^n_p) \qquad \text{and} \qquad \tau^{n-}_p = \min (0, d^n_p)
\end{\eq}
where
\begin{\eq}   \label{2.11}
d^n_p = 2n \big( \frac{1}{p} - \frac{1}{2} \big), \qquad 0<p \le \infty, \quad n\in \nat.
\end{\eq}

\begin{definition}   \label{D2.3}
Let $B^s_p (\rn)$, $n\in \nat$, $1<p<\infty$ and $s\in \real$ be the distinguished  Besov spaces according to Definition \ref{D2.1}
and \eqref{2.8}.
\cm
{\em (i)} Then
\begin{\eq}   \label{2.12}
X^s_p (\rn) =
\begin{cases}
L_p (\rn) &\text{if $2\le p<\infty$, $s=0$}, \\
B^s_p (\rn) & \text{if $2 \le p <\infty$, $s>0$}, \\
B^s_p (\rn) &\text{if $1<p \le 2$, $s \ge d^n_p$},
\end{cases}
\end{\eq}
and
\begin{\eq}   \label{2.13}
Y^s_p (\rn) =
\begin{cases}
B^s_p (\rn) &\text{if $2 \le p <\infty$, $s \le d^n_p$}, \\
B^s_p (\rn) &\text{if $1<p \le 2$, $s<0$}, \\
L_p (\rn) &\text{if $1<p\le 2$, $s=0$}.
\end{cases}
\end{\eq}
{\em (ii)}  The space $X^s_p (\rn)$ is called limiting if $s= \tau^{n+}_p$ and the space $Y^s_p (\rn)$ is called limiting if $s=
\tau^{n-}_p$.
\end{definition}
\begin{figure}[t]
\begin{minipage}{\textwidth}
~\hfill\input{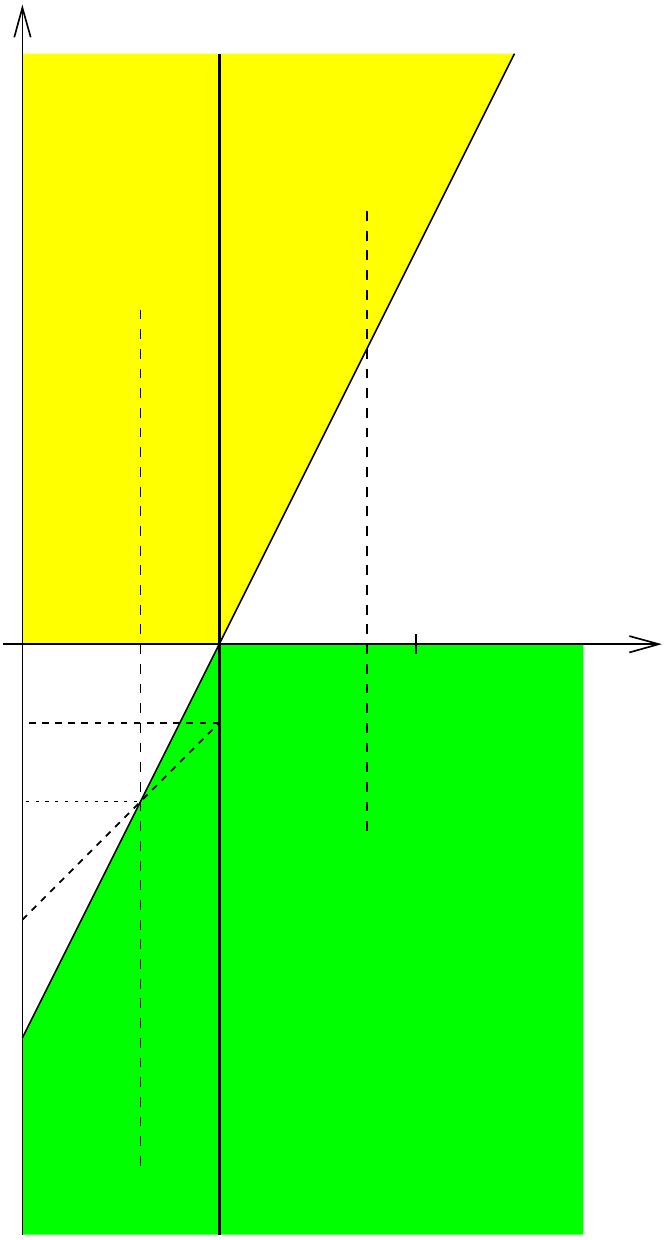_t}\hfill~\\
\end{minipage}
Figure 1: Fourier  mappings \label{Fig1}
\end{figure}

\begin{remark}   \label{R2.4}
Figure \ref{Fig1} illustrates that the source spaces $X^s_p (\rn)$ are defined in the region
\begin{\eq}   \label{2.14}
\big\{ \big( \frac{1}{p}, s \big): \ 1<p<\infty, \ s \ge \tau^{n+}_p \big\}
\end{\eq}
whereas the target spaces $Y^s_p (\rn)$ are defined in the region
\begin{\eq}   \label{2.15}
\big\{ \big( \frac{1}{p}, s \big): \ 1<p<\infty, \ s \le \tau^{n-}_p \big\}.
\end{\eq}
The limiting spaces are located at the boundaries of these regions. They will play a rather peculiar role in what follows.
\end{remark}

\subsection{Basic assertions}    \label{S2.2}
We are mainly interested in continuous and compact mappings of type \eqref{1.8} for the Fourier transform $F$ as recalled in 
\eqref{1.1}. Limiting spaces as introduced in Definition \ref{D2.3} will play a peculiar role. To prepare what follows we adopt first
a wider point of view. Let $\Bs (\rn)$ be the Besov spaces according to Definition \ref{D2.1}, including $B^s_p (\rn)$ as in 
\eqref{2.8} and $\Cc^s (\rn)$ as in \eqref{2.9}. Recall that $F: \ A(\rn) \hra B(\rn)$ means that $f$ (better its restriction from
$S'(\rn)$ to $A(\rn)$) maps $A(\rn)$ continuously into $B(\rn)$ where both $A(\rn)$ and $B(\rn)$ are quasi--Banach spaces, continuously
embedded in $S'(\rn)$. This will also be written as $A(\rn) \os{F}{\hra} B(\rn)$. Similarly, $A(\rn) \os{\id}{\hra} B(\rn)$ stands for 
the usual embedding of $A(\rn)$  into $B(\rn)$. We take for granted that the reader is familiar with classical assertions about 
embeddings between the above function spaces, especially of type $\Bs (\rn)$. In case of doubt one may consult \cite{T20} and the 
detailed references and discussions within. Let again
\begin{\eq}   \label{2.16}
d^n_p = 2n \big( \frac{1}{p} - \frac{1}{2} \big), \qquad 0<p \le \infty, \quad n\in \nat.
\end{\eq}

\begin{theorem}   \label{T2.5}
Let $1\le p \le \infty$ and $\frac{1}{p}+ \frac{1}{p'} =1$. Then the mapping
\begin{\eq}   \label{2.17}
F: \quad L_p (\rn) \hra \Cc^{- \frac{n}{p'}} (\rn)
\end{\eq}
is continuous but not compact. Furthermore,
\begin{\eq}   \label{2.18}
L_p (\rn) \os{F}{\hra} L_{p'} (\rn) \os{\id}{\hra} \Cc^{- \frac{n}{p'}} (\rn) \quad \text{if} \quad 1\le p \le2,
\end{\eq}
and
\begin{\eq}   \label{2.19}
L_p (\rn) \os{F}{\hra} B^{-n (\frac{1}{2} - \frac{1}{p})}_{2,p} (\rn) \os{\id}{\hra} B^{d^n_p}_p (\rn) \os{\id}{\hra}
\Cc^{- \frac{n}{p'}} (\rn) \quad \text{if} \quad 2<p \le \infty.
\end{\eq}
\end{theorem}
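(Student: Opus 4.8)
The plan is to establish the continuous embedding \eqref{2.17} directly, then decompose it through the chains \eqref{2.18} and \eqref{2.19} to see how it factors, and finally rule out compactness by a scaling/dilation argument.

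First I would handle the endpoint cases. For $p=1$, the elementary bound $\|\wh{f}\,|L_\infty(\rn)\| \le (2\pi)^{-n/2}\|f\,|L_1(\rn)\|$ combined with the fact that a bounded entire function of exponential type lies in $\Cc^0(\rn) = B^0_{\infty,\infty}(\rn)$ (equivalently: the pieces $(\vp_j \wh{f})^\vee$ are uniformly bounded in $L_\infty$) gives $F: L_1(\rn)\hra \Cc^0(\rn)$, which is \eqref{2.17} since $p'=\infty$ and $n/p'=0$. For $p=2$, \eqref{1.3} gives $FL_2(\rn)=L_2(\rn)$, and then $L_2(\rn)=B^0_{2,2}(\rn)\hra B^{-n/2}_{\infty,\infty}(\rn)=\Cc^{-n/2}(\rn)$ by the classical Sobolev-type embedding $B^0_{2,2}\hra B^{-n(\frac12-\frac1\infty)}_{\infty,\infty}$; note $-n/2 = -n/p'$ here. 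This already proves \eqref{2.18} for the two endpoints, and the interior $1<p<2$ follows by interpolation or, more simply, by factoring through the Hausdorff--Young inequality $F: L_p(\rn)\hra L_{p'}(\rn)$ (from \eqref{1.3}) followed by the embedding $L_{p'}(\rn)=B^0_{p',p'}(\rn)\hra \Cc^{-n/p'}(\rn)$, which is again the standard elementary embedding $B^0_{p',p'}\hra B^{-n/p'}_{\infty,\infty}$ valid because $p'\ge 2 \ge 1$; this is exactly \eqref{2.18}.

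For the range $2<p\le\infty$ one cannot use Hausdorff--Young with exponent $p$, so I would instead invoke the known sharp mapping property of $F$ on $L_p$ for $p>2$: by duality with the Hausdorff--Young inequality (or directly from the literature on Fourier transforms of $L_p$-functions, e.g.\ as recorded in \cite{T20}), one has $F: L_p(\rn)\hra B^{-n(\frac12-\frac1p)}_{2,p}(\rn)$ for $2\le p\le\infty$. The first arrow in \eqref{2.19} is this assertion. The second arrow, $B^{-n(\frac12-\frac1p)}_{2,p}(\rn)\hra B^{d^n_p}_p(\rn)$, is a classical Besov embedding raising the integrability from $2$ to $p$ at the cost of lowering smoothness by $n(\frac12-\frac1p)$ — the target smoothness is $-n(\frac12-\frac1p)-n(\frac12-\frac1p)=-2n(\frac12-\frac1p)=d^n_p$ and the fine index passes from $p$ to $p$, so this is the borderline-sharp embedding $B^{s_0}_{2,p}\hra B^{s_0-n(\frac12-\frac1p)}_{p,p}$, which holds (with equal fine indices, $2\le p$). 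The third arrow $B^{d^n_p}_p(\rn)\hra \Cc^{-n/p'}(\rn)$ is once more the elementary embedding $B^{t}_{p,p}\hra B^{t-n/p}_{\infty,\infty}$ with $t=d^n_p$, since $d^n_p - n/p = 2n(\frac1p-\frac12)-\frac np = \frac np - n = -\frac{n}{p'}$. Composing the three arrows yields \eqref{2.17} for $2<p\le\infty$ as well, completing the continuity statement.

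Finally, for the failure of compactness I would use a dilation argument: pick a fixed $0\ne g\in S(\rn)$ and consider $g_\lambda(x)=g(\lambda x)$ for $\lambda\to\infty$ (or a sequence of translates in frequency). One checks that $\|g_\lambda\,|L_p(\rn)\|$ scales like $\lambda^{-n/p}$ while $\|\wh{g_\lambda}\,|\Cc^{-n/p'}(\rn)\|$ scales like $\lambda^{-n/p}$ as well (the Fourier transform stretches the frequency support by $\lambda$, contributing $\lambda^{n/p'}$ from the negative-smoothness weight, against $\lambda^{-n}$ from $\wh{g_\lambda}(\xi)=\lambda^{-n}\wh g(\xi/\lambda)$), so the normalized images $\wh{g_\lambda}/\|g_\lambda\,|L_p\|$ form a bounded sequence in $\Cc^{-n/p'}(\rn)$ that is ``escaping to infinite frequency'' and has no convergent subsequence — precisely because the limiting embedding $L_{p'}(\rn)\hra\Cc^{-n/p'}(\rn)$ (or $B^{d^n_p}_p\hra\Cc^{-n/p'}$) is itself non-compact, being invariant under this scaling. \textbf{The main obstacle} I anticipate is making this non-compactness argument clean: one must exhibit the bounded sequence in $L_p(\rn)$ whose $F$-images stay bounded-below-in-norm but converge to $0$ weakly-$*$ (or lie in mutually ``far'' dyadic frequency annuli), so that the relevant quasi-norm in $\Cc^{-n/p'}$ cannot go to zero for any subsequence; choosing frequency-dislocated bumps $\wh f_j$ supported in the annulus $\{|\xi|\sim 2^j\}$ with $\|f_j\,|L_p\|=1$ makes the $\Cc^{-n/p'}$-norms bounded below (by homogeneity of the weight $2^{-jn/p'}$ matched against the $L_\infty$-normalization) while any two differ by a fixed amount, which is the standard obstruction to relative compactness.
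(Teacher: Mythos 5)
Your treatment of $1\le p\le 2$ is essentially the paper's Step 1 (Hausdorff--Young composed with $L_{p'}(\rn)\hra \Cc^{-n/p'}(\rn)$; note only the embedding $L_{p'}(\rn)\hra B^0_{p',p'}(\rn)$ holds for $p'\ge 2$, not the equality you wrote, but that is harmless). The real problem begins with $2<p\le\infty$: the first arrow of \eqref{2.19}, $F:L_p(\rn)\hra B^{-n(\frac12-\frac1p)}_{2,p}(\rn)$, is exactly the non-classical core of the theorem, and you do not prove it -- you invoke it ``by duality with the Hausdorff--Young inequality (or directly from the literature)''. The introduction of the paper stresses that, apart from \cite{Ryd20}, such mapping properties are not available in the literature, and ``duality with Hausdorff--Young'' as stated does not produce this estimate (the dual statement would be $F:B^{n(\frac12-\frac1p)}_{2,p'}(\rn)\hra L_{p'}(\rn)$, which is provable but needs its own argument, e.g.\ H\"older on dyadic annuli, and which you neither formulate nor prove; the case $p=\infty$ also needs separate care). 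The paper instead proves the fine index $\infty$ version $F:L_p(\rn)\hra B^{-n(\frac12-\frac1p)}_{2,\infty}(\rn)$ directly from Plancherel plus H\"older applied blockwise to \eqref{2.7}, and then lowers the fine index to $p$ by real interpolation through Lorentz spaces $(L_{p_1},L_{p_2})_{\theta,p}=L_p$. Without some such argument your proof of \eqref{2.19}, and hence of \eqref{2.17} for $p>2$, is a citation rather than a proof.

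The non-compactness argument also fails as written, because you locate the Littlewood--Paley frequency of the image $\wh f$ in the wrong variable. In the target space $\Cc^{-n/p'}(\rn)$ the relevant blocks of $\wh f$ are $(\vp_k F\wh f)^\vee$ with $F\wh f=f(-\cdot)$, so the weight $2^{-kn/p'}$ is governed by the $x$-support of $f$, not by the $\xi$-support of $\wh f$. Consequently your dilation family $g_\lambda(x)=g(\lambda x)$ gives $\|\wh{g_\lambda}\,|\Cc^{-n/p'}(\rn)\|\lesssim \|\wh{g_\lambda}\,|L_\infty(\rn)\|\sim\lambda^{-n}$, not $\lambda^{-n/p}$, so the normalized images tend to $0$ in norm and witness nothing; likewise, for your bumps with $\wh{f_j}$ supported in $\{|\xi|\sim 2^j\}$ and $\|f_j\,|L_p(\rn)\|=1$ one has $\wh{f_j}\sim 2^{-jn/p'}\chi(2^{-j}\cdot)$, whose frequency content sits in the block $k=0$, whence $\|\wh{f_j}\,|\Cc^{-n/p'}(\rn)\|\lesssim 2^{-jn/p'}\to 0$ for $p>1$ -- again no lower bound, no obstruction. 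The paper's construction is the transposed one: take $f_j=2^{-jn/p}\psi(2^{-j}\cdot)$ with $\psi$ smooth, compactly supported, so that $f_j$ lives in the $x$-annulus $\{|x|\sim 2^j\}$ and satisfies $\vp_j f_j=f_j$, $\supp f_j\cap\supp\vp_k=\emptyset$ for $k\ne j$. Then $\|f_j\,|L_p(\rn)\|\sim 1$, each $\wh{f_j}$ occupies the single block $k=j$, the weight $2^{-jn/p'}$ matches $\|f_j^\vee\|_\infty\sim 2^{jn-jn/p}$ to give $\|\wh{f_j}\,|\Cc^{-n/p'}(\rn)\|\sim 1$, and disjointness of the blocks yields $\|\wh{f_j}-\wh{f_l}\,|\Cc^{-n/p'}(\rn)\|\sim 1$ for $j\ne l$. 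You need this (or an equivalent modulation-type construction), not dilations, to rule out compactness.
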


\begin{proof}
{\em Step 1.} The well--known embedding  $\id$ in \eqref{2.18} and \eqref{1.3} justify \eqref{2.18}.
\cm
{\em Step 2.} Let $2<p \le \infty$,
\begin{\eq}  \label{2.20}
\frac{1}{2} = \frac{1}{q} + \frac{1}{p} \quad \text{and} \quad \sigma = - n \big( \frac{1}{2} - \frac{1}{p} \big) = - \frac{n}{q}.
\end{\eq}
Then it follows from $\|g^\vee \, | L_2 (\rn) \| = \|g \, |L_2 (\rn) \|$ and H\"{o}lder's inequality applied to \eqref{2.7} that
\begin{\eq}   \label{2.21}
\| Ff \, | B^\sigma_{2,\infty} (\rn) \| \sim \sup_{j\in \no} 2^{j \sigma} \, \| \vp_j f \, | L_2 (\rn) \|  \le c\, \sup_{j \in \no}
2^{j (\sigma + \frac{n}{q})} \, \| f \, | L_p (\rn) \|
\end{\eq}
and
\begin{\eq}   \label{2.22}
F: \quad L_p (\rn) \hra B^{-n (\frac{1}{2} - \frac{1}{p})}_{2, \infty} (\rn).
\end{\eq}
Let $2<p_1 <p<p_2 \le \infty$ and $\frac{1}{p} = \frac{1-\theta}{p_1} + \frac{\theta}{p_2}$. Then one has by real interpolation
according to \cite[Section 1.18.6, pp.\,131--134]{T78} that
\begin{\eq}   \label{2.23}
\big( L_{p_1} (\rn), L_{p_2} (\rn) \big)_{\theta,r} = L_{p,r} (\rn), \qquad 1 \le r \le \infty,
\end{\eq}
where $L_{p,r} (\rn)$ are the well--known Lorentz spaces quasi--normed according to \eqref{2.30} below with $L_{p,p} (\rn) = L_p (\rn)
$. Applied to \eqref{2.22} combined with a related real interpolation of Besov spaces one obtains
\begin{\eq}   \label{2.24}
F: \quad L_{p,r} (\rn) \hra B^{-n (\frac{1}{2} - \frac{1}{p})}_{2,r} (\rn), \qquad 1\le r \le \infty,
\end{\eq}
and for $2<r=p <\infty$ the first mapping in \eqref{2.19}. The case $p=\infty$ is already covered by \eqref{2.22}. The remaining
embeddings in \eqref{2.19} are classical based on $p>2$ and
\begin{\eq}   \label{2.25}
B^{s_1}_{p_1, q_1} (\rn) \hra B^{s_2}_{p_2, q_2} (\rn), \qquad s_1 - \frac{n}{p_1} = s_2 - \frac{n}{p_2}
\end{\eq}
with $0<p_1 \le p_2 \le \infty$ and $0<q_1 \le q_2 \le \infty$.
\cm
{\em Step 3.} It remains to prove that the mapping in \eqref{2.17} is not compact. Let $\psi$ be a non--trivial compactly supported
$C^\infty$ function  in $\rn$ and $\psi_j (x) = \psi(2^{-j} x)$, $j\in \nat$, such that
\begin{\eq}   \label{2.26}
\psi_j (x) \cdot \vp_j (x) = \psi_j (x) \quad \text{and} \quad \supp \psi_j \cap \supp \vp_k = \emptyset \quad \text{if} \quad k\in
\no, \quad j \not= k, 
\end{\eq}
where $\{\vp_j \}$ is a suitably chosen resolution of unity according to \eqref{2.3}--\eqref{2.5}. Let $f_j (x) = 2^{-\frac{jn}{p}}
\psi_j (x)$. Then
\begin{\eq}   \label{2.27}
\| f_j \, | L_p (\rn) \| \sim 1, \qquad j\in \nat,
\end{\eq}
whereas
\begin{\eq}   \label{2.28}
\| \wh{f_j} \, | \Cc^{-\frac{n}{p'}} (\rn) \| \sim \sup_{k\in \no, x\in \rn} 2^{-\frac{n}{p'}k} \big| (\vp_k f_j )^\vee (x) \big| 
\sim \sup_{x\in \rn} 2^{-\frac{jn}{p'} - \frac{jn}{p}} \, |\psi_j^\vee (x) | \sim 1
\end{\eq}
and
\begin{\eq}   \label{2.29}
\| \wh{f_j} - \wh{f_l} \, | \Cc^{-\frac{n}{p'}} (\rn) \| \sim 1 \quad \text{if} \quad j\not= l.
\end{\eq}
This shows that the mapping $F$ in \eqref{2.17} is not compact.
\end{proof}

\begin{remark}   \label{R2.6}
We proved in \eqref{2.24} a little bit more than stated in the above theorem. This can be elaborated as follows. Recall that the 
Lorentz $L_{p,r} (\rn)$ with $0<p<\infty$ and $0<r \le \infty$ are quasi--normed by
\begin{\eq}   \label{2.30}
\| f \, | L_{p,r} (\rn) \| = \Big( \int^\infty_0 \big( t^{1/p} f^* (t) \big)^r \frac{\di t}{t} \Big)^{1/r}
\end{\eq}
(usual modification if $r=\infty$), where $f^* (t)$ is the decreasing rearrangement of $f$. Let $0<\theta <1$, $0<p_1 < p_2 <\infty$
and $0<r_1, r_2, r \le \infty$. Then the real interpolation
\begin{\eq}   \label{2.31}
\big( L_{p_1, r_1} (\rn), L_{p_2, r_2} (\rn) \big)_{\theta,r} = L_{p,r} (\rn), \qquad \frac{1}{p} = \frac{1-\theta}{p_1} + 
\frac{\theta}{p_2},
\end{\eq}
is covered by \cite[Theorem 1.18.6/2, Remark 1.18.6/5, pp.\,134--135]{T78}. Applied to \eqref{2.17}--\eqref{2.19}, combined with
suitable real interpolations of Besov spaces one obtains
\begin{\eq}   \label{2.32}
F: \quad L_{p,r} (\rn) \hra \Cc^{- \frac{n}{p'}} (\rn), \qquad 1<p<\infty, \quad 0<r \le \infty,
\end{\eq}
\begin{\eq}   \label{2.33}
L_{p,r} (\rn) \os{F}{\hra} L_{p',r} (\rn) \os{\id}{\hra} \Cc^{-\frac{n}{p'}} (\rn) \quad 1<p<2, \quad 0<r \le \infty,
\end{\eq}
and
\begin{\eq}   \label{2.34}
L_{p,r} (\rn) \os{F}{\hra} B^{-n(\frac{1}{2} - \frac{1}{p})}_{2,r} (\rn) \os{\id}{\hra} B^{d^n_p}_{p,r} (\rn) 
\os{\id}{\hra} \Cc^{-\frac{n}{p'}} (\rn), \quad 2<p<\infty, 
\end{\eq}
$0<r \le \infty$.
If one chooses the functions $\psi_j$ in Step 3 of the above proof property then it follows from \eqref{2.30} that \eqref{2.27} remains
valid with $L_{p,r} (\rn)$ in place of $L_p (\rn)$. This shows that the mapping $F$ in \eqref{2.32} is not compact. But one take this
simple observation  also as an illustration of the so--called one--sided compact real interpolation as treated in \cite{CKS92} and
\cite{Cwi92}: If one interpolates the assumed compact mapping \eqref{2.32} with an other suitable continuous mapping of this type such
that one obtains \eqref{2.17} then this mapping must also be compact. But this is not the case as we already know.
\end{remark}

\begin{remark}    \label{R2.7}
As already mentioned in the Introduction, \cite{Ryd20} is the only paper known to us which fits in the above scheme. Let
\begin{\eq}    \label{2.35}
H^s_p (\rn) = F^s_{p,2} (\rn), \qquad 1<p<\infty, \quad s\in \real,
\end{\eq}
be the usual (inhomogeneous) fractional Sobolev spaces. It is one of the main aims of \cite{Ryd20} to justify the continuous mapping
\begin{\eq}    \label{2.36}
F: \quad L_p (\rn) \hra H^{d^n_p}_p (\rn), \qquad 2<p<\infty,
\end{\eq}
with $d^n_p$ as in \eqref{2.16}. From the continuous embedding
\begin{\eq}    \label{2.37}
H^{d^n_p}_p (\rn) \hra B^{d^n_p}_p (\rn), \qquad 2< p <\infty,
\end{\eq}
it follows 
\begin{\eq}    \label{2.38}
F: \quad L_p (\rn) \hra B^{d^n_p}_p (\rn), \qquad 2<p<\infty,
\end{\eq}
what recovers the corresponding assertion in \eqref{2.19}. The one--dimensional case of \eqref{2.38} is applied in \cite{Ryd20} to
study the Laplace transform in the upper complex half--plane related to so--called Laplace--Carleson embeddings and corresponding Carleson measures.
\end{remark}
    
\section{Continuous mappings}    \label{S3}
First  we employ Theorem \ref{T2.5} to deal with the mapping \eqref{1.8} paying special attention to the case when at least one of
these spaces is limiting according to Definition \ref{D2.3}(ii). Let $d^n_p$, $\tau^{n+}_p$ and $\tau^{n-}$ be as in \eqref{2.10},
\eqref{2.11} and \eqref{2.14}, \eqref{2.15}.

\begin{proposition}   \label{P3.1}
Let $1<p<\infty$,
\begin{\eq}   \label{3.1}
X^{s_1}_p (\rn), \ s_1 \ge \tau^{n+}_p, \quad \text{and} \quad Y^{s_2}_p (\rn), \ s_2 \le \tau^{n-}_p,
\end{\eq}
be the spaces as introduced in Definition \ref{D2.3}. Then
\begin{\eq}   \label{3.2}
F: \quad X^{s_1}_p (\rn) \hra Y^{s_2}_p (\rn)
\end{\eq}
is continuous. This mapping is not compact if at least one of the two spaces $X^{s_1}_p (\rn)$ and $Y^{s_2}_p (\rn)$ is limiting.
\end{proposition}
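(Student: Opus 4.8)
\emph{Continuity.} First I reduce to the two \emph{corner} spaces, $s_1=\tau^{n+}_p$ and $s_2=\tau^{n-}_p$. The elementary embeddings $X^{s_1}_p(\rn)\os{\id}{\hra}X^{\tau^{n+}_p}_p(\rn)$ for $s_1\ge\tau^{n+}_p$ (namely $B^{s_1}_p\hra L_p$ if $2\le p<\infty$, $s_1>0$, and $B^{s_1}_p\hra B^{d^n_p}_p$ if $1<p\le2$, $s_1\ge d^n_p$) and $Y^{\tau^{n-}_p}_p(\rn)\os{\id}{\hra}Y^{s_2}_p(\rn)$ for $s_2\le\tau^{n-}_p$ (namely $B^{d^n_p}_p\hra B^{s_2}_p$ if $2\le p<\infty$, $s_2\le d^n_p$, and $L_p\hra B^{s_2}_p$ if $1<p\le2$, $s_2<0$) sandwich the mapping \eqref{3.2} between the corner mapping and two identities. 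For $2\le p<\infty$ the corner mapping $F: L_p(\rn)\hra B^{d^n_p}_p(\rn)$ is part of \eqref{2.19} (and trivial for $p=2$). For $1<p\le2$ the corner mapping $F: B^{d^n_p}_p(\rn)\hra L_p(\rn)$ follows from the case of $p'$ by duality: $F$ is its own transpose for the pairing $\langle\wh f,g\rangle=\langle f,\wh g\rangle$, one has $(B^{s}_{p',p'}(\rn))'=B^{-s}_{p,p}(\rn)$, $(L_{p'}(\rn))'=L_p(\rn)$, and $-d^n_{p'}=d^n_p$, so transposing the bounded mapping $F: L_{p'}(\rn)\hra B^{d^n_{p'}}_{p'}(\rn)$ from \eqref{2.19} gives the bounded mapping $F: B^{d^n_p}_p(\rn)\hra L_p(\rn)$.

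\emph{Non-compactness --- reduction to $p\ge2$.} By Schauder's theorem, $F: X^{s_1}_p\to Y^{s_2}_p$ is compact if and only if its transpose $F:(Y^{s_2}_p)'\to(X^{s_1}_p)'$ is compact. Using the duality formulas above together with $-d^n_p=d^n_{p'}$ one verifies that, for $1<p<2$, this transpose is a mapping of exactly the same type for the conjugate exponent $p'>2$, and that $X^{s_1}_p(\rn)$ is limiting if and only if $(X^{s_1}_p)'$ is a limiting $Y$-space at $p'$, and symmetrically for $Y^{s_2}_p(\rn)$; hence ``at least one of the two spaces is limiting'' is preserved and it suffices to treat $2\le p<\infty$. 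In that range the hypothesis says that either the source is $X^{0}_p(\rn)=L_p(\rn)$ or the target is $Y^{d^n_p}_p(\rn)=B^{d^n_p}_p(\rn)$, and in each case I exhibit a sequence, bounded in the source space, whose image under $F$ has no convergent subsequence in the target space.

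\emph{The two sequences.} (a) Source $L_p(\rn)$: fix a non-trivial $\rho\in C^\infty_0(\rn)$ with $\supp\rho\subset\{|x|<1\}$ and put $f_j(x)=e^{i2^jx_1}\rho(x)$, so $\|f_j\,|\,L_p(\rn)\|=\|\rho\,|\,L_p(\rn)\|$ and $\wh{f_j}=\wh\rho(\cdot-2^je_1)$. Since $\wh{\wh{f_j}}(\xi)=f_j(-\xi)$ is supported in $\{|\xi|<1\}$, only the block $k=0$ contributes to the quasi-norm \eqref{2.7} of $\wh{f_j}$, so $\|\wh{f_j}\,|\,Y^{s_2}_p(\rn)\|\sim\|\wh\rho\,|\,L_p(\rn)\|$ uniformly in $j$ and in $s_2\le\tau^{n-}_p$, while $\|\wh{f_j}-\wh{f_l}\,|\,Y^{s_2}_p(\rn)\|\sim\|\wh\rho(\cdot-2^je_1)-\wh\rho(\cdot-2^le_1)\,|\,L_p(\rn)\|\to2^{1/p}\,\|\wh\rho\,|\,L_p(\rn)\|$ as $\min(j,l)\to\infty$; thus no subsequence of $\{\wh{f_j}\}$ converges. (b) Target $B^{d^n_p}_p(\rn)$: take the sequence $f_j(x)=2^{-jn/p}\psi(2^{-j}x)$ from Step 3 of the proof of Theorem \ref{T2.5} ($\psi\in C^\infty_0(\rn)$ supported in a thin annulus, adapted to the resolution of unity so that \eqref{2.26} holds). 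Because $\wh{f_j}$ is essentially concentrated in $\{|\xi|\le1\}$ (the rapidly decaying tails being harmless in \eqref{2.7}), the family $\{f_j\}$ is bounded not only in $L_p(\rn)$ but in every $B^{s_1}_p(\rn)$ with $s_1\ge0$; and since $B^{d^n_p}_p(\rn)\hra\Cc^{-n/p'}(\rn)$ by \eqref{2.19}, the estimates $\|\wh{f_j}\,|\,\Cc^{-n/p'}(\rn)\|\sim1$ and $\|\wh{f_j}-\wh{f_l}\,|\,\Cc^{-n/p'}(\rn)\|\sim1$ ($j\ne l$) proved there show that a compact $F: X^{s_1}_p\to B^{d^n_p}_p$ would yield a compact $F: X^{s_1}_p\to\Cc^{-n/p'}$, contradicting Theorem \ref{T2.5}.

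\emph{Where the difficulty lies.} The point of both constructions is to keep the source norm bounded while the image norm stays bounded away from $0$. In (a) this uses that a fixed bump translated to infinity does not shrink in the \emph{inhomogeneous} Besov scale --- it stays at Littlewood--Paley level $0$, where the smoothness weight is $1$. In (b) it hinges on the precise dilation normalisation $2^{-jn/p}$: it makes $\{f_j\}$ low-frequency, hence bounded in all $B^{s_1}_p$ with $s_1\ge0$, and at the same time prevents $\wh{f_j}$, which sits at Littlewood--Paley level $j$, from decaying in $B^{d^n_p}_p(\rn)$ --- precisely because $s_2=d^n_p$ is the limiting value, the weight $2^{jd^n_p}$ exactly offsets the growth $\|\wh{f_j}\,|\,L_p(\rn)\|\sim2^{-jd^n_p}$. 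Everything else reduces to Theorem \ref{T2.5} and routine embeddings.
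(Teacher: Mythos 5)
Your proof is correct and follows essentially the same route as the paper: continuity via \eqref{2.19} together with the self-duality of $F$ and the dualities \eqref{3.4}--\eqref{3.5}; non-compactness for $2\le p<\infty$ via the dilated family of Step 3 of the proof of Theorem \ref{T2.5} (limiting target) and a modulation family (limiting source); and Schauder duality to pass to $1<p\le2$. The only differences are cosmetic: you check boundedness in $B^{s_1}_p(\rn)$ and the separation of the images by direct Fourier-support arguments, where the paper uses the $W^m_p$-norms \eqref{3.7}--\eqref{3.8} and translated wavelets measured in $\Cc^\sigma(\rn)$.
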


\begin{proof}
{\em Step 1.} It follows from \eqref{2.19} and $FL_2 (\rn) = L_2 (\rn)$ that
\begin{\eq}   \label{3.3}
X^{s_1}_p (\rn) \os{\id}{\hra} L_p (\rn) \os{F}{\hra} B^{d^n_p}_p (\rn) \os{\id}{\hra} Y^{s_2}_p (\rn), \quad 2\le p<\infty.
\end{\eq}
This proves \eqref{3.2} for $2\le p <\infty$. The Fourier transform $F$ is self--dual, $F' =F$, in the dual pairing $\big( S(\rn),
S'(\rn)\big)$ and its restriction to related function spaces. We rely on
\begin{\eq}   \label{3.4}
B^s_p (\rn)' = B^{-s}_{p'} (\rn), \qquad 1<p<\infty, \quad \frac{1}{p} + \frac{1}{p'} =1,
\end{\eq}
\cite[Theorem 2.11.2, p.\,178]{T83}, complemented by $L_p (\rn)' = L_{p'} (\rn)$. With $d^n_{p'} = - d^n_p$ one has
\begin{\eq}   \label{3.5}
X^s_p (\rn)' = Y^{-s}_{p'} (\rn), \ s \ge \tau^{n+}_p \quad \text{and} \quad Y^s_p (\rn)' = X^{-s}_{p'} (\rn), \ s \le \tau^{n-}_p.
\end{\eq}
This covers all spaces of interest as illustrated in Figure \ref{Fig1}. It extends \eqref{3.2} from $2\le p <\infty$ to $1<p<\infty$.
\cm
{\em Step 2.} It remains to prove that $F$ in \eqref{3.2} is not compact if either $X^{s_1}_p (\rn)$ or $Y^{s_2}_p (\rn)$ is limiting.
Let $2\le p <\infty$ and $Y^{s_2}_p (\rn) = B^{d^n_p}_p (\rn)$. By the last embedding in \eqref{2.19}, extended to $p=2$, it follows 
that it is sufficient to show that
\begin{\eq}   \label{3.6}
F: \quad X^{s_1}_p (\rn) \hra \Cc^{-\frac{n}{p'}} (\rn), \qquad s_1 \ge 0,
\end{\eq} 
is not compact. We rely on the same functions $f_j$ as in Step 3 of the proof of Theorem \ref{T2.5} which deals with the case $X^0_p
(\rn) = L_p (\rn)$. Then
\begin{\eq}   \label{3.7}
\| f_j \, | W^m_p (\rn) \| = \sum_{|\alpha| \le m} \| D^\alpha f_j \, | L_p (\rn) \| \sim 1, \qquad j\in \nat,
\end{\eq}
extends \eqref{2.27} to the Sobolev spaces $W^m_p (\rn)$, $m \in \nat$, and, by inclusion,
\begin{\eq}   \label{3.8}
\| f_j \, | B^{s_1}_p (\rn) \| \sim 1, \qquad j\in \nat, \quad s_1 >0.
\end{\eq}
Together with \eqref{2.28}, \eqref{2.29} it follows that $F$ in \eqref{3.6} is not compact. For $2\le p <\infty$ and $X^{s_1}_p (\rn)
= L_p (\rn)$ it is sufficient to show that
\begin{\eq}   \label{3.9}
F: \quad L_p (\rn) \hra \Cc^\sigma (\rn), \qquad \sigma \le - \frac{n}{p'},
\end{\eq}
is not compact. Let $\psi$ be a sufficiently smooth compactly supported starting function in related wavelet expansions as described
in \cite[Section 1.2.2, pp.\,14--17]{T08}. Then $\psi_m (x) = \psi(x-m)$, $m\in \zn$, are also wavelets. Let 
\begin{\eq}   \label{3.10}
f_m (x) = \big( F^{-1} \psi_m \big) (x) = e^{imx} \big( F^{-1} \psi \big) (x), \qquad x\in \rn, \quad m\in \zn.
\end{\eq}
In particular, $\|f_m \, |L_p (\rn) \| = \|\psi^\vee \, | L_p (\rn) \|$ whereas $\big\{ Ff_m = \psi_m: \ m\in \zn \big\}$ is not
compact in $\Cc^\sigma (\rn)$. This covers the desired assertion for $2 \le p <\infty$.
\cm
{\em Step 3.} As far as the $1<p \le 2$ is concerned we rely on the well--known assertion that the linear mapping $T: \ A \hra B$
between the Banach spaces $A$ and $B$ is compact if, and only if, the dual mapping $T': \ B' \hra A'$ is compact, \cite[Theorem 4.19,
p.\,105]{Rud91}. Then it follows from $F= F'$ and \eqref{3.5} that the assertion about non--compactness can be transferred from
$2\le p <\infty$ to $1<p \le 2$.
\end{proof}

The next section deals with the degree of of compactness of the mapping $F$ in \eqref{3.2}, expressed in terms of entropy numbers, if 
neither $X^{s_1}_p (\rn)$ nor $Y^{s_2}_p (\rn)$ is limiting. It is reasonable to fix the qualitative side of these assertions combined
with Proposition \ref{P3.1}.

\begin{theorem}  \label{T3.2}
Let $1<p<\infty$ and let $\tau^{n+}_p$, $\tau^{n-}_p$ be as in \eqref{2.10}. Let
\begin{\eq}   \label{3.11}
X^{s_1}_p (\rn), \ s_1 \ge \tau^{n+}_p, \quad \text{and} \quad Y^{s_2}_p (\rn), \ s_2 \le \tau^{n-}_p,
\end{\eq}
be the spaces as introduced in Definition \ref{D2.3}. Then 
\begin{\eq}   \label{3.12}
F: \quad X^{s_1}_p (\rn) \hra Y^{s_2}_p (\rn)
\end{\eq}
is continuous. This mapping is compact if, and only if, both $s_1 > \tau^{n+}_p$ and $s_2 < \tau^{n-}_p$.
\end{theorem}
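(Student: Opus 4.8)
The continuity of \eqref{3.12} and the fact that it fails to be compact as soon as one of the two spaces is limiting are already contained in Proposition \ref{P3.1}. What remains to be shown is the converse: if $s_1 > \tau^{n+}_p$ and $s_2 < \tau^{n-}_p$, then $F$ in \eqref{3.12} is compact. I would first reduce this to the range $2 \le p < \infty$. For $1<p\le2$ one has $p' \ge 2$, and since $F'=F$, the duality relations \eqref{3.5} identify the dual of $F \colon X^{s_1}_p(\rn) \hra Y^{s_2}_p(\rn)$ with the map $F \colon X^{-s_2}_{p'}(\rn) \hra Y^{-s_1}_{p'}(\rn)$. A short inspection of \eqref{2.10}, \eqref{2.11} shows that $s_1 > \tau^{n+}_p$ and $s_2 < \tau^{n-}_p$ are equivalent to $-s_2 > \tau^{n+}_{p'}$ and $-s_1 < \tau^{n-}_{p'}$; as all spaces involved are (reflexive) Banach spaces, Schauder's theorem (\cite[Theorem 4.19, p.\,105]{Rud91}, used in the same way as in Proposition \ref{P3.1}) reduces the claim for $1<p\le2$ to the claim for $p'\ge2$.

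So let $2\le p<\infty$, hence $\tau^{n+}_p=0$ and $\tau^{n-}_p=d^n_p\le0$, and the hypotheses read $s_1>0$ and $s_2<d^n_p$; in particular $X^{s_1}_p(\rn)=B^{s_1}_p(\rn)$ and $Y^{s_2}_p(\rn)=B^{s_2}_p(\rn)$. The plan is to factor $F$ through a genuinely compact embedding, the point being that on \rn\ compactness can only arise from a combination of a strict gain of smoothness and a strict gain of decay. Fix $\ve$ with $0<\ve<s_1$. Combining the lift $(1-\Delta)^{(s_1-\ve)/2}$ (Fourier multiplier $\langle\cdot\rangle^{s_1-\ve}$), which is an isomorphism $B^{s_1}_p(\rn)\to B^{\ve}_p(\rn)$, the elementary embeddings $B^{\ve}_p(\rn)\hra B^0_{p,1}(\rn)\hra L_p(\rn)$, the continuity $F\colon L_p(\rn)\hra B^{d^n_p}_p(\rn)$ from \eqref{2.19} (respectively $FL_2(\rn)=L_2(\rn)$ if $p=2$), and the intertwining identity $\langle\cdot\rangle^{s_1-\ve}\,Ff=F\big((1-\Delta)^{(s_1-\ve)/2}f\big)$, one obtains the continuous mapping
\begin{\eq}   \label{3.13}
F\colon\quad B^{s_1}_p(\rn)\ \hra\ B^{d^n_p}_p\big(\rn,\langle\cdot\rangle^{s_1-\ve}\big)
\end{\eq}
into the weighted Besov space $\{g\colon \langle\cdot\rangle^{s_1-\ve}g\in B^{d^n_p}_p(\rn)\}$. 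It then remains to invoke the familiar compactness criterion for weighted Besov embeddings (see \cite{T20} and the references therein): the embedding
\begin{\eq}   \label{3.14}
\id\colon\quad B^{d^n_p}_p\big(\rn,\langle\cdot\rangle^{s_1-\ve}\big)\ \hra\ B^{s_2}_p(\rn)
\end{\eq}
is compact precisely because the smoothness drops strictly, $d^n_p>s_2$, and the weight exponent is strictly positive, $s_1-\ve>0$. Composing the continuous map \eqref{3.13} with the compact map \eqref{3.14} shows that $F$ in \eqref{3.12} is compact, which settles the case $2\le p<\infty$ and, via the duality reduction above, the full theorem.

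The step I expect to carry the real content is the passage from the hypotheses on $s_1$ and $s_2$ to the two strict inequalities that govern compactness of the weighted embedding \eqref{3.14}: one has to see that the surplus smoothness $s_1>\tau^{n+}_p$ of the source is converted by $F$ into exactly the surplus decay at infinity encoded in \eqref{3.13}, while the deficit smoothness $s_2<\tau^{n-}_p$ of the target supplies the missing local (high--frequency) compactness, and that both surpluses are strict exactly when neither space is limiting — the borderline cases then being excluded by Proposition \ref{P3.1}. The remaining ingredients — the lifting and intertwining, and the compactness criterion for embeddings of weighted Besov spaces (which in a sharper, quantitative form will also underlie Section \ref{S4}) — are standard and can be quoted. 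Care should be taken only with the endpoint in \eqref{3.13}: lifting by the full order $s_1$ would land in $B^0_{p,p}(\rn)\not\subset L_p(\rn)$ for $p>2$, which is why one lifts by $s_1-\ve$ and uses $B^{\ve}_p(\rn)\hra B^0_{p,1}(\rn)\hra L_p(\rn)$; this costs nothing since $s_1-\ve>0$ still.
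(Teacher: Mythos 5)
Your argument is correct, but it follows a genuinely different route from the paper's. The paper proves Theorem \ref{T3.2} by combining Proposition \ref{P3.1} with Theorem \ref{T4.8}: there the compactness is obtained quantitatively, by the exact isomorphism $FH^{\sigma}(\rn)=L_2(\rn,w_\sigma)$ of \eqref{4.37}, the compact embeddings of $L_2(\rn,w_\alpha)$ into $B^{s}_p(\rn)$ from Corollary \ref{C4.7}, the Sobolev-type embedding \eqref{4.38} to treat $1<p<2$ directly, and then the entropy-number duality of \cite{AMS04}, \cite{AMST04} to pass to $2<p<\infty$ -- all of which simultaneously yields the entropy estimates \eqref{4.31}, \eqref{4.33}, \eqref{4.35}. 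You instead argue purely qualitatively: you treat $2\le p<\infty$ directly by lifting with $(1-\Delta)^{(s_1-\ve)/2}$, invoking $F\colon L_p(\rn)\hra B^{d^n_p}_p(\rn)$ from \eqref{2.19} (respectively $FL_2=L_2$), and the intertwining identity, so that $F$ factors through the weighted space $B^{d^n_p}_p(\rn,w_{s_1-\ve})$ (in the sense of Definition \ref{D4.3} via \eqref{4.6}), whose embedding into $B^{s_2}_p(\rn)$ is compact by the criterion \eqref{4.7}--\eqref{4.9} since $d^n_p>s_2$ and $s_1-\ve>0$ (note $1/p< 1/p+(s_1-\ve)/n=1/p_*$ indeed holds); then you descend to $1<p\le 2$ by $F'=F$, \eqref{3.5} and Schauder's theorem -- the reverse duality direction from the paper, and Schauder suffices because no entropy bounds are claimed. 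I checked the parameter bookkeeping ($s_1>\tau^{n+}_p$, $s_2<\tau^{n-}_p$ is equivalent to $-s_2>\tau^{n+}_{p'}$, $-s_1<\tau^{n-}_{p'}$) and the endpoint care with $B^{0}_{p,p}(\rn)\not\hra L_p(\rn)$ for $p>2$, which your $\ve$-loss correctly circumvents; everything is sound. What your route buys is a self-contained proof of the compactness claim from Theorem \ref{T2.5} plus the qualitative weighted-embedding criterion, without the entropy machinery of Section \ref{S4}; what it gives up is exactly that quantitative information: because of the $\ve$-loss and the $L_p$-based factorization it does not recover the entropy estimates of Theorem \ref{T4.8}, which is the reason the paper organizes the proof the other way around.
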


\begin{proof}
This follows from Proposition  \ref{P3.1} on the one hand and Theorem \ref{T4.8} below on th other hand.
\end{proof}	

So far we excluded the regions 
\begin{\eq}   \label{3.13}
\big\{ \big( \frac{1}{p},s \big): \ 2<p<\infty, \ d^n_p <s< 0 \big\} \quad \text{and} \quad \big\{ \big(\frac{1}{p}, s \big): \ 1<p<2, 
0<s<d^n_p \big\}
\end{\eq}
with $d^n_p = 2n (\frac{1}{p} - \frac{1}{2} \big)$ as above, both for the source spaces $X^s_p (\rn)$ and the target spaces $Y^s_p (\rn
)$. But it comes out that this is quite natural (at least in the context of unweighted spaces and mappings with fixed $p$). The 
related limiting spaces prove to be barriers.  

Let again $d^n_p$, $\tau^{n+}_p$ and $\tau^{n-}_p$ be as in \eqref{2.10}, \eqref{2.11}.

\begin{corollary}    \label{C3.3}
Let $1<p<\infty$, $s_1 \in \real$ and $s_2 \in \real$. Let
\begin{\eq}   \label{3.14}
F: \quad B^{s_1}_p (\rn) \hra B^{s_2}_p (\rn)
\end{\eq}
be continuous. Then $s_1 \ge \tau^{n+}_p$ and $s_2 \le \tau^{n-}_p$.
\end{corollary}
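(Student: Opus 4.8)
The plan is to show that continuity of $F\colon B^{s_1}_p(\rn)\hra B^{s_2}_p(\rn)$ already forces $(1/p,s_1)$ and $(1/p,s_2)$ into the two regions displayed in Figure \ref{Fig1}, by testing the mapping against two well-chosen one-parameter families of functions and letting the parameter tend to infinity. The first move is a reduction by duality: $F$ is self-dual, $B^s_p(\rn)'=B^{-s}_{p'}(\rn)$ by \eqref{3.4}, and $d^n_{p'}=-d^n_p$, so $\tau^{n+}_{p'}=-\tau^{n-}_p$; hence continuity of $F\colon B^{s_1}_p(\rn)\hra B^{s_2}_p(\rn)$ is equivalent to continuity of $F\colon B^{-s_2}_{p'}(\rn)\hra B^{-s_1}_{p'}(\rn)$, and it is enough to prove the single implication
\[
F\colon B^a_p(\rn)\hra B^b_p(\rn)\ \text{continuous}\ \Longrightarrow\ a\ge\tau^{n+}_p=\max(0,d^n_p),\qquad 1<p<\infty,\ a,b\in\real,
\]
applied once directly (yielding $s_1\ge\tau^{n+}_p$) and once to the dual mapping (yielding $s_2\le\tau^{n-}_p$).

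First I would prove $a\ge 0$ with a modulation family. Fix $0\neq\eta\in C^\infty(\rn)$ with support in a small ball about the origin and put $f_k=(\eta(\cdot-2^ke_1))^\vee$, that is $f_k(x)=e^{i2^kx_1}\eta^\vee(x)$ for large $k\in\nat$. Since $\widehat{f_k}=\eta(\cdot-2^ke_1)$ is supported in the dyadic corona $\{2^{k-1}\le|\xi|\le 2^{k+1}\}$, only the three blocks $\vp_{k-1},\vp_k,\vp_{k+1}$ contribute and a routine almost-orthogonality estimate gives $\|f_k\,|B^a_p(\rn)\|\sim 2^{ka}\,\|\eta^\vee\,|L_p(\rn)\|$. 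On the other hand $\widehat{f_k}$ is a fixed profile merely translated in frequency, so by translation invariance of Besov norms $\|\widehat{f_k}\,|B^b_p(\rn)\|=\|\eta\,|B^b_p(\rn)\|$ is a positive constant, independent of $k$. The hypothesis $\|\widehat{f_k}\,|B^b_p\|\le c\,\|f_k\,|B^a_p\|$ then reads $1\lesssim 2^{ka}$ for all large $k$, whence $a\ge0$.

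Using $a\ge0$ I would then prove $a\ge d^n_p$ with a dilation family. Fix $0\neq\phi\in C^\infty(\rn)$ with compact support and $\widehat\phi>0$ on $\{|\xi|\le 1\}$ (e.g. a sufficiently narrow bump), and put $f_k(x)=\phi(2^kx)$, $k\to\infty$. Homogeneity of Besov quasi-norms (only the blocks $\vp_0,\dots,\vp_k$ matter, the $j$-th of size $\sim 2^{-kn}2^{jn/p'}$, the higher ones contributing negligible Schwartz tails; this hinges on $a+n/p'>0$, i.e. precisely on the already-proved $a\ge0$) gives $\|f_k\,|B^a_p(\rn)\|\sim 2^{k(a-n/p)}$. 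On the other side $\widehat{f_k}(\xi)=2^{-kn}\widehat\phi(2^{-k}\xi)$, and the Fourier transform of $\widehat{f_k}$ is (the reflection of) $f_k$, supported in $\{|x|\lesssim 2^{-k}\}\subset\{|x|\le 1\}$ for $k$ large; by the elementary fact that $\|g\,|B^b_p(\rn)\|=\|g\,|L_p(\rn)\|$ whenever $\widehat g$ is supported in the closed unit ball, $\|\widehat{f_k}\,|B^b_p(\rn)\|=\|\widehat{f_k}\,|L_p(\rn)\|=2^{-kn/p'}\,\|\widehat\phi\,|L_p(\rn)\|$. Continuity now forces $2^{-kn/p'}\lesssim 2^{k(a-n/p)}$ for all large $k$, i.e. $-n/p'\le a-n/p$, that is $a\ge n/p-n/p'=d^n_p$. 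Combining, $a\ge\max(0,d^n_p)=\tau^{n+}_p$, and the duality step gives $s_2\le\tau^{n-}_p$.

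The computations behind the two asymptotics $\|f_k\,|B^a_p\|\sim 2^{ka}$ and $\|f_k\,|B^a_p\|\sim 2^{k(a-n/p)}$ are standard homogeneity and almost-orthogonality estimates for Besov quasi-norms, so I do not expect a genuine technical obstacle there. The one point that has to be arranged in the right order — and which I would regard as the crux — is the restriction $a>-n/p'$ needed for the dilation estimate: this is exactly why the modulation family, which is insensitive to that restriction and detects the barrier $s=0$, must be run first, and the dilation family, which detects the barrier $s=d^n_p$, only afterwards. Choosing this particular pair of test families (a modulation/translation pair together with a dilation pair) and recognizing that their interplay — through the preliminary bound $a\ge 0$ — is what makes both barriers accessible, is the main idea.
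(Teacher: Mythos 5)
Your proposal is correct, but it takes a genuinely different route from the paper. The paper proves the corollary indirectly, by contradiction: assuming, say, $2\le p<\infty$ and $s_1<0$, it combines the hypothetical continuous map \eqref{3.14} (with $s_2<d^n_p$ after lowering $s_2$) with the compact map $F:\ B^{s_0}_p(\rn)\hra B^{s_2}_p(\rn)$, $s_0>0$, from Theorem \ref{T4.8}, and uses one--sided compactness under real interpolation \cite[Theorem 1.16.4/1]{T78} together with the embedding $L_p(\rn)\hra B^0_{p,p}(\rn)$ to conclude that $F:\ L_p(\rn)\hra B^{s_2}_p(\rn)$ would be compact, contradicting Proposition \ref{P3.1}; the case $s_2>d^n_p$ is handled symmetrically and $1<p<2$ by duality. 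You instead test continuity directly: a modulation family $f_k=(\eta(\cdot-2^k e_1))^\vee$ forces $s_1\ge 0$ (translation invariance of the target norm against growth $2^{ks_1}$ of the source norm), and then a dilation family $\phi(2^k\cdot)$ — whose source estimate $\|\phi(2^k\cdot)\,|B^{s_1}_p(\rn)\|\lesssim 2^{k(s_1-n/p)}$ indeed only needs $s_1>-n/p'$, secured by the first step — forces $s_1\ge d^n_p$; duality (the same self--duality $F'=F$ and \eqref{3.4} the paper uses, legitimate since $S(\rn)$ is dense for $1<p,p'<\infty$) transfers this to $s_2\le\tau^{n-}_p$. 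Both arguments are sound; yours is more elementary and self--contained, needing neither the compactness results of Section \ref{S4} nor interpolation, and it exhibits the barriers as a pure consequence of continuity tested on explicit families, while the paper's argument is shorter given the machinery already in place and fits its stated theme of exploiting available ingredients. Two small points of hygiene in your write--up: only the upper bounds of your two asymptotics are actually used (so the positivity assumption $\widehat\phi>0$ and the lower--bound almost--orthogonality discussion can be dropped), and in the dilation step one should note that blocks with $j>k$ are controlled by the rapid decay of $\widehat\phi$, as you indicate, so that the geometric sum over $j\le k$ dominates precisely when $s_1+n/p'>0$.
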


\begin{proof}
Let $2\le p <\infty$ and $s_1 <0$. Then one may assume in \eqref{3.14} that $s_2 <d^n_p$. Combined with 
\begin{\eq}   \label{3.15}
F: \quad B^{s_0}_p (\rn) \hra B^{s_2}_p (\rn), \qquad s_0>0, \quad \text{compact},
\end{\eq}
one obtains by the classical real interpolation, \cite[Theorem 1.16.4/1, p.\,117]{T78}, and embeddding that also
\begin{\eq}   \label{3.16}
F: \quad L_p (\rn) \hra B^{s_2}_p (\rn) \qquad \text{is compact.}
\end{\eq}
But this contradicts Proposition \ref{P3.1}. Let $2 \le p <\infty$ and $s_2 > d^n_p$. Then we may assume in \eqref{3.14} that $s_1   >0$. Combined with
\begin{\eq}   \label{3.17}
F: \quad B^{s_1}_p (\rn) \hra B^{s_0}_p (\rn), \qquad s_0 <d^n_p, \quad \text{compact},
\end{\eq} 
one obtains by real interpolation that also
\begin{\eq}   \label{3.18}
F: \quad B^{s_1}_p (\rn) \hra B^{d^n_p}_p (\rn) \qquad \text{is compact}.
\end{\eq}
But this contradicts again Proposition \ref{P3.1}. One can argue similarly if $1<p<2$ (or rely on duality).
\end{proof}

\begin{remark}   \label{R3.4}
Theorem  \ref{T3.2} and Corollary \ref{C3.3} show that the regions \eqref{2.14} and \eqref{2.15} are very natural for mappings 
generated by $F$ between (unweighted) function spaces of the above type with fixed $p$.
\end{remark}

\section{Compact mappings}    \label{S4}
\subsection{Entropy numbers and weighted spaces}    \label{S4.1}
Proposition  \ref{P3.1} and Corollary \ref{C3.3} show that \eqref{2.14} and \eqref{2.15} are natural regions in the $\big( \frac{1}{p},
s\big)$-half-plane for mappings $F$ generated by the Fourier transform. As already indicated in Theorem \ref{T3.2} the mapping $F$ in
\eqref{3.12} is even compact if, and only if, $s_1 > \tau^{n+}_p$ and $s_2 < \tau^{n-}_p$. We wish to justify this claim and to
discuss the degree of compactness in terms of entropy numbers. This will be reduced to compact embeddings between related weighted
spaces. We collect what we need following \cite[Chapter 6]{T06} which in turn is based on \cite[Chapter 4]{ET96} and \cite{HaT94a},
\cite{HaT94b}, \cite{HaT05}. But first we recall what is meant by entropy numbers.

\begin{definition}   \label{D4.1}
Let $T: \ A \hra B$ be a linear and continuous mapping from the quasi--Banach space $A$ into the the quasi--Banach space $B$. Then
the entropy number $e_k (T)$, $k\in \nat$, is the infimum of all $\ve >0$ such that
\begin{\eq}  \label{4.1}
T (U_A) \subset \bigcup^{2^{k-1}}_{j=1} ( b_j + \ve U_B) \quad \text{for some $b_1, \ldots, b_{2^{k-1}} \in B$,}
\end{\eq}
where $U_A = \{ a\in A: \ \|a \, | A \| \le 1 \}$ and $U_B = \{ b\in B: \ \|b \, | B \| \le 1 \}$.
\end{definition}

\begin{remark}   \label{R4.2}
Basic properties and related references may be found in \cite[Section 1.10, pp.\,55--58]{T06}. We only mention that the linear and
continuous mapping $T: \, A \hra B$ is compact if, and only if, $e_k (T) \to 0$ for $k \to \infty$.
\end{remark}

We need the weighted counterpart of $L_p (\rn)$, quasi--normed according to \eqref{2.1}: The quasi--Banach space $L_p (\rn, w_\alpha)$,
$0<p \le \infty$ with
\begin{\eq}  \label{4.2}
w_\alpha (x) = (1 + |x|^2)^{\alpha/2}, \qquad \alpha \in \real, \quad x\in \rn,
\end{\eq}
consists of all (complex--valued) Lebesgue--measurable functions in $\rn$ such that
\begin{\eq}   \label{4.3}
\|f \, | L_p (\rn, w_\alpha) \| = \| w_\alpha f \, | L_p (\rn) \|
\end{\eq}
is finite. Then Definition \ref{D2.1} can be extended as follows.

\begin{definition}   \label{D4.3}
Let $\vp = \{\vp_j \}^\infty_{j=0}$ be the dyadic resolution of unity as introduced in \eqref{2.3}--\eqref{2.5}. Let
\begin{\eq}   \label{4.4}
s\in \real, \quad \alpha \in \real \quad \text{and} \quad 0<p,q \le \infty.
\end{\eq}
Then $\Bs (\rn, w_\alpha)$ is the collection of all $f\in S'(\rn)$ such that
\begin{\eq}   \label{4.5}
\|f \, | \Bs (\rn, w_\alpha) \|_{\vp} 
= \Big( \sum^\infty_{j=0} 2^{jsq} \big\| (\vp_j \wh{f})^\vee |L_p (\rn, w_\alpha) \big\|^q \Big)^{1/q}
\end{\eq}
is finite $($with the usual modification if $q=\infty)$.
\end{definition}

\begin{remark}   \label{R4.4}
For details and properties one may consult the above references. These quasi--Banach spaces are independent of $\vp$ (equivalent 
quasi--norms) what justifies the omission of the subscript $\vp$ in the sequel. We only mention that $f \to w_\alpha f$ is an 
isomorphic mapping of $\Bs (\rn, w_\alpha)$ onto $\Bs (\rn)$ and
\begin{\eq}   \label{4.6}
\| w_\alpha f \, | \Bs (\rn) \| \sim \| f \, | \Bs (\rn, w_\alpha) \|, \qquad \alpha \in \real,
\end{\eq}
(equivalent quasi--norms) in good agreement with \eqref{4.3}.
\end{remark}

Next we recall what is known about the entropy numbers $e_k (\id)$, $k\in \nat$, of the compact embeddings between the spaces 
introduced above,
\begin{\eq}   \label{4.7}
\id: \quad B^{s_1}_{p_1, q_1} (\rn, w_\alpha) \hra B^{s_2}_{p_2, q_2} (\rn).
\end{\eq}
Let $s_1 \in \real$, $s_2 \in \real$, $\alpha \ge 0$ and $p_1, p_2, q_1, q_2 \in (0, \infty]$. Let
\begin{\eq}   \label{4.8}
\delta = s_1 - \frac{n}{p_1} - \big( s_2 - \frac{n}{p_2} \big) \quad \text{and} \quad \frac{1}{p_*} = \frac{1}{p_1} + 
\frac{\alpha}{n}.
\end{\eq}
Then $\id$ in \eqref{4.7} is compact if, and only if,
\begin{\eq}   \label{4.9}
s_1 > s_2, \quad \delta >0, \quad \alpha >0, \quad p_2 > p_*.
\end{\eq}
This coincides with \cite[Proposition 6.29, p.\,281]{T06}. According to \cite[Theorem 6.31, pp.\,282--283, Theorem 6.33, p.\,284]{T06}
one has the following (almost) final assertions for the related entropy numbers.

\begin{figure}[t]
\begin{minipage}{\textwidth}
~\hfill\input{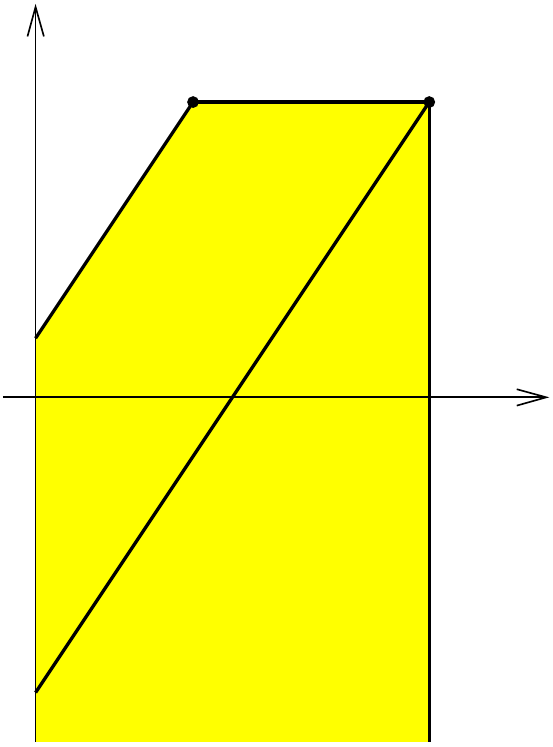_t}\hfill~\\
\end{minipage}
Figure 2: Entropy numbers, general case   \label{Fig2}
\end{figure}

\begin{proposition}   \label{P4.5}
Let $e_k (\id)$, $k\in \nat$, be the entropy numbers of the compact embeddings \eqref{4.7}--\eqref{4.9}. Then, as illustrated in 
Figure $2$, 
\begin{\eq}   \label{4.10}
e_k (\id) \sim k^{-\frac{s_1 - s_2}{n}}, \quad k \in \nat, \qquad \text{if} \quad \delta < \alpha,
\end{\eq}
and
\begin{\eq}   \label{4.11}
e_k (\id) \sim k^{- \frac{\alpha}{n} + \frac{1}{p_2} - \frac{1}{p_1}}, \quad k\in \nat, \qquad \text{if} \quad \delta > \alpha.
\end{\eq}
Let, in addition,
\begin{\eq}   \label{4.12}
\vr = \frac{s_1 - s_2}{n} + \frac{1}{q_2} - \frac{1}{q_1}.
\end{\eq}
Then
\begin{\eq}   \label{4.13}
e_k (\id) \sim k^{- \frac{s_1 - s_2}{n}}, \quad k\in \nat, \qquad \text{if} \quad \delta = \alpha, \quad \vr <0,
\end{\eq}
and
\begin{\eq}   \label{4.14}
e_k (\id) \sim k^{-\frac{s_1 - s_2}{n}} \big( \log k)^{\vr}, \quad 1<k\in \nat, \qquad \text{if} \quad \delta = \alpha, \quad \vr>0.
\end{\eq}
\end{proposition}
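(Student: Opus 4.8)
This proposition is quoted in the paper from \cite[Theorems 6.31 and 6.33]{T06}, which in turn rest on the machinery of \cite[Chapter 4]{ET96}; so strictly the ``proof'' is a pointer to those sources. If one wanted to reconstruct it, the plan is to reduce the weighted embedding to a block‑diagonal sum of finite‑dimensional identity maps and then apply the standard entropy‑number toolkit. First I would discretise: by a compactly supported Daubechies wavelet basis as in \cite[Chapter 1]{T08}, the spaces $B^{s_1}_{p_1,q_1}(\rn, w_\alpha)$ and $B^{s_2}_{p_2,q_2}(\rn)$ are isomorphic to weighted sequence spaces indexed by dyadic cubes $Q_{j,m}$ with $j\in\no$ and $m\in\zn$. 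Grouping the cubes at level $j$ according to which dyadic annulus $\Omega_l = \{x\in\rn: 2^{l-1}\le |x|\le 2^{l+1}\}$ (with $\Omega_0$ the unit ball) they lie in, and using $w_\alpha(x)\sim 2^{l\alpha}$ on $\Omega_l$, the embedding $\id$ in \eqref{4.7} becomes equivalent to a block‑diagonal operator $\bigoplus_{j,l} 2^{-j\delta - l\alpha}\,\id_{N_{j,l}}$, where $\id_{N_{j,l}}$ is the identity from $\ell_{p_1}^{N_{j,l}}$ to $\ell_{p_2}^{N_{j,l}}$ with $N_{j,l}$ growing like $2^{(j+l)n}$, the blocks being coupled across the index $(j,l)$ by the outer $\ell_{q_1}\to\ell_{q_2}$ norms; the conditions $\delta>0$, $\alpha>0$, $p_2>p_*$ from \eqref{4.9} are exactly what makes this sum compact.

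Second, I would feed in Sch\"utt's sharp estimates for $e_k(\id_N:\ell_{p_1}^N\to\ell_{p_2}^N)$ (constant for $k\lesssim\log N$, the rate $(k^{-1}\log(1+N/k))^{1/p_1-1/p_2}$ in the range $\log N\lesssim k\lesssim N$ when $p_1<p_2$, exponential for $k\gtrsim N$) and then run the summation/optimisation argument of \cite[Chapter 4]{ET96}: to cover the image of the unit ball by $2^{k-1}$ balls of least radius, one distributes $k\sim\sum_{j,l} k_{j,l}$ among the blocks so as to equalise the block radii. The trichotomy in the statement then comes from how the exponential decay rate $\delta$ in the level index $j$ compares with the exponential decay rate $\alpha$ in the annulus index $l$. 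If $\delta<\alpha$, the sum over $l$ converges geometrically and only the $j$‑direction matters: combining $2^{-j\delta}$ with the $\ell_{p_1}\to\ell_{p_2}$ factor produces the smoothness rate $k^{-(s_1-s_2)/n}$ of \eqref{4.10}. If $\delta>\alpha$, the same with the roles of $j$ and $l$ reversed: here the $\ell_p$‑factor does not help across annuli and one gets the slower, weight‑driven rate $k^{-\alpha/n+1/p_2-1/p_1}$ of \eqref{4.11}. If $\delta=\alpha$, the two directions decay at the same rate, so the number of pairs $(j,l)$ with $j+l$ fixed, which is of order $\log k$, enters; the finer $\ell_{q_1}\to\ell_{q_2}$ bookkeeping over these $\sim\log k$ blocks turns this count into the factor $(\log k)^{\vr}$ with $\vr$ as in \eqref{4.12}, present exactly when $\vr>0$ (that is \eqref{4.14}) and invisible when $\vr<0$ (that is \eqref{4.13}). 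Matching lower bounds come from restricting to one block in the first case, to a spread‑out family of one bump per annulus over many annuli in the second, and to a genuinely two‑parameter configuration over $\sim\log k$ annuli in the borderline case, each time invoking the corresponding lower estimates from \cite[Chapter 4]{ET96}.

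The hard part will be the borderline case $\delta=\alpha$. The logarithmic upper bound requires tracking precisely how the budget $k$ is shared among the growing number of equally contributing blocks and how the microscopic parameters $q_1,q_2$ enter the outer coupling; and the matching logarithmic lower bound is more delicate still, since a one‑block test configuration no longer suffices and one has to exploit many scales and many annuli simultaneously. Away from the diagonal $\delta=\alpha$ the argument is essentially a bookkeeping exercise built on the Sch\"utt estimates and the \cite{ET96} summation lemma once the reduction to annuli is in place — which is why in the present paper the proposition is simply cited from \cite{T06}.
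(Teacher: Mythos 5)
Your proposal matches the paper: for Proposition \ref{P4.5} the paper offers no argument beyond the citation of \cite[Proposition 6.29, Theorems 6.31 and 6.33]{T06} (resting in turn on \cite[Chapter 4]{ET96} and \cite{HaT94a}, \cite{HaT94b}, \cite{HaT05}), exactly as you state in your first sentence. Your sketch of the wavelet/sequence-space reduction, the block decomposition over levels and annuli, Sch\"utt-type estimates and the summation argument is a fair outline of what those cited sources actually do, so nothing further is required.
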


\begin{remark}   \label{R4.6}
This is sufficient for our later purposes. Further information and discussions may be found in \cite[Section 6.4, pp.\,279--286]{T06}.
\end{remark}

We are mainly interested in the source spaces
\begin{\eq}   \label{4.15}
L_2 (\rn, w_\alpha) = B^0_{2,2} (\rn, w_{\alpha}), \qquad \alpha >0,
\end{\eq}
normed by
\begin{\eq}   \label{4.16}
\| f \, | L_2 (\rn, w_\alpha) \| = \| w_\alpha f \, | L_2 (\rn) \|
\end{\eq}
as in \eqref{4.3} and the target spaces
\begin{\eq}    \label{4.17}
B^s_p (\rn) = B^s_{p,p} (\rn), \quad s<0, \quad \delta = n \big(\frac{1}{p} - \frac{1}{2} \big) -s >0, \quad \frac{1}{p} < 
\frac{1}{p_*} = \frac{1}{2} + \frac{\alpha}{n},
\end{\eq}
with
\begin{\eq}   \label{4.18}
H^s (\rn) = B^s_2 (\rn), \qquad s<0,
\end{\eq}
as special case. Recall the well-known
classical assertion that $f\in S' (\rn)$ belongs to $H^\sigma (\rn)$, $\sigma \in \real$, if, and only if,
$(w_\sigma \wh{f} )^\vee \in L_2 (\rn)$ with $w_\sigma$ as in \eqref{4.2} and
\begin{\eq}   \label{4.19}
\| f \, | H^\sigma (\rn) \| \sim \| (w_\sigma \wh{f} )^\vee | L_2 (\rn) \| = \| w_\sigma \wh{f} \, | L_2 (\rn) \|.
\end{\eq}

\begin{figure}[t]
\begin{minipage}{\textwidth}
~\hfill\input{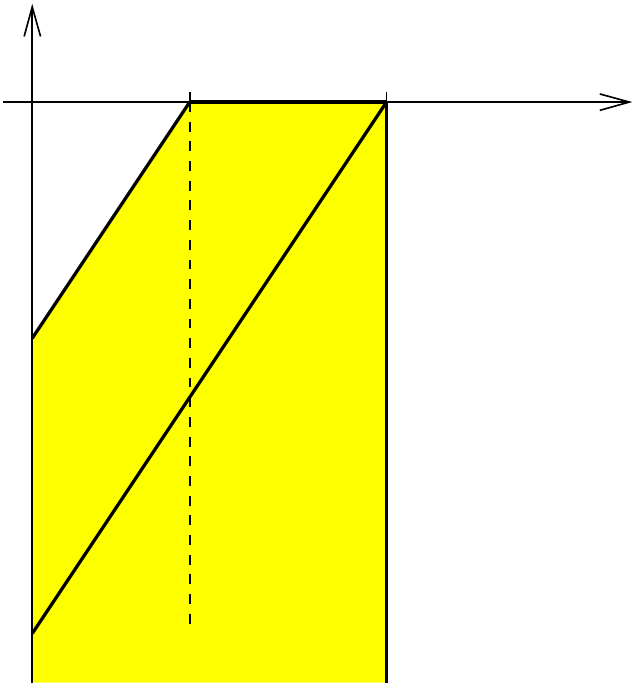_t}\hfill~\\
\end{minipage}
Figure 3: Entropy numbers, special case   \label{Fig3}
\end{figure}

\begin{corollary}   \label{C4.7}
Let $\alpha >0$ and $s<0$.
\cm
{\em (i)} Let, in addition,
\begin{\eq}    \label{4.20}
0 \le \frac{1}{p} < \frac{1}{2} + \frac{\alpha}{n} \quad \text{and} \quad \delta = n \big(\frac{1}{p} - \frac{1}{2} \big) -s >0.
\end{\eq}
Then, as illustrated in Figure  $3$, 
\begin{\eq}   \label{4.21}
\id: \quad L_2 (\rn, w_\alpha) \hra B^s_p (\rn)
\end{\eq}
is compact and
\begin{\eq}    \label{4.22}
e_k (\id) \sim
\begin{cases}
k^{\frac{s}{n}} &\text{if $\delta < \alpha$}, \\
k^{\frac{s}{n}} (\log k)^{\frac{\alpha}{n}} &\text{if $\delta = \alpha$}, \\
k^{- \frac{\alpha}{n} + \frac{1}{p} - \frac{1}{2}} &\text{if $\delta> \alpha$},
\end{cases}
\end{\eq}
$2 \le k \in \nat$.
\cm
{\em (ii)} Then
\begin{\eq}    \label{4.23}
\id: \quad L_2 (\rn, w_\alpha) \hra H^s (\rn)
\end{\eq}
is compact and
\begin{\eq}   \label{4.24}
e_k (\id) \sim
\begin{cases}
k^{\frac{s}{n}} &\text{if $-s < \alpha$}, \\
\big( \frac{k}{\log k} \big)^{\frac{s}{n}} &\text{if $-s = \alpha$}, \\
k^{- \frac{\alpha}{n}} &\text{if $-s> \alpha$},
\end{cases}
\end{\eq}
$2\le k \in \nat$.
\end{corollary}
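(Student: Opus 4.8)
The plan is to deduce everything from Proposition~\ref{P4.5} and the compactness criterion \eqref{4.9} by specialising the parameters. Using the identification $L_2(\rn, w_\alpha) = B^0_{2,2}(\rn, w_\alpha)$ recorded in \eqref{4.15}, the embedding \eqref{4.21} is the instance of \eqref{4.7} with $s_1 = 0$, $p_1 = q_1 = 2$ and $s_2 = s$, $p_2 = q_2 = p$. Plugging these into \eqref{4.8} gives $\delta = -\frac{n}{2} - \bigl(s - \frac{n}{p}\bigr) = n\bigl(\frac1p - \frac12\bigr) - s$, which is exactly the quantity $\delta$ appearing in \eqref{4.20}, and $\frac{1}{p_*} = \frac12 + \frac{\alpha}{n}$.

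Next I would check the four conditions in \eqref{4.9} against the hypotheses of part~(i): $s_1 > s_2$ is $s < 0$; both $\delta > 0$ and $\alpha > 0$ are assumed; and $p_2 > p_*$ is the same as $\frac1p < \frac12 + \frac{\alpha}{n}$, which is part of \eqref{4.20}. Hence $\id$ in \eqref{4.21} is compact. The two off-diagonal cases of \eqref{4.22} are then read off directly: \eqref{4.10} gives $e_k(\id) \sim k^{s/n}$ when $\delta < \alpha$, and \eqref{4.11} gives $e_k(\id) \sim k^{-\frac{\alpha}{n} + \frac1p - \frac12}$ when $\delta > \alpha$. For the borderline case $\delta = \alpha$ I would compute $\vr$ from \eqref{4.12}: here $\vr = -\frac{s}{n} + \frac1p - \frac12$, and since $\delta = \alpha$ means $n\bigl(\frac1p - \frac12\bigr) - s = \alpha$, this simplifies to $\vr = \frac{\alpha}{n} > 0$; thus \eqref{4.14} applies and yields $e_k(\id) \sim k^{s/n}(\log k)^{\alpha/n}$, completing \eqref{4.22}.

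Part~(ii) is then just the special case $p = 2$ of part~(i), because $H^s(\rn) = B^s_2(\rn) = B^s_{2,2}(\rn)$ by \eqref{4.18} and \eqref{2.9}, and the constraint $\frac1p < \frac12 + \frac{\alpha}{n}$ is automatic since $\alpha > 0$. For $p = 2$ one has $\delta = -s$, so the three regimes $\delta < \alpha$, $\delta = \alpha$, $\delta > \alpha$ of \eqref{4.22} turn into $-s < \alpha$, $-s = \alpha$, $-s > \alpha$; in the middle one $k^{s/n}(\log k)^{\alpha/n} = k^{s/n}(\log k)^{-s/n} = \bigl(k/\log k\bigr)^{s/n}$, and in the last one the exponent $-\frac{\alpha}{n} + \frac1p - \frac12$ collapses to $-\frac{\alpha}{n}$. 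This gives \eqref{4.24}, and \eqref{4.23} is compact for the same reason as \eqref{4.21}.

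I do not expect a genuine obstacle: the corollary is bookkeeping on top of Proposition~\ref{P4.5}. The only point that deserves a moment of attention is that the borderline case $\delta = \alpha$ always lands in the regime $\vr > 0$ of Proposition~\ref{P4.5} (rather than the $\vr < 0$ regime of \eqref{4.13} or the uncovered case $\vr = 0$); as noted, this is forced by $\alpha > 0$, which makes $\vr = \alpha/n > 0$.
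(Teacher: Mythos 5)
Your proposal is correct and follows exactly the paper's route: the paper proves part (i) by specialising Proposition \ref{P4.5} (with $L_2(\rn,w_\alpha)=B^0_{2,2}(\rn,w_\alpha)$, $s_1=0$, $p_1=q_1=2$, $s_2=s$, $p_2=q_2=p$) and obtains part (ii) as the case $p=2$ of part (i), which is precisely your bookkeeping, including the observation that $\delta=\alpha$ forces $\vr=\alpha/n>0$ so that \eqref{4.14} applies.
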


\begin{proof}
Part (i) follows from Proposition \ref{P4.5} and part (ii) follows from part (i).
\end{proof}

\subsection{Main assertions}   \label{S4.2}
Compared with Theorem \ref{T3.2} it remains to prove that the mapping $F$ in \eqref{3.12} is compact if $s_1 > \tau^{n+}_p$ and $s_2
< \tau^{n-}_p$, where again
\begin{\eq}  \label{4.25}
\tau^{n+}_p = \max (0, d^n_p), \quad \tau^{n-}_p = \min (0, d^n_p ) \quad \text{and} \quad d^n_p = 2n \big( \frac{1}{p} - \frac{1}{2}
\big).
\end{\eq}
This applies by Definition \ref{D2.3} to
\begin{\eq}   \label{4.26}
X^{s_1}_p (\rn) = B^{s_1}_p (\rn), \qquad 1<p<\infty, \quad s_1 > \tau^{n+}_p,
\end{\eq}
\begin{\eq}   \label{4.27}
Y^{s_2}_p (\rn) = B^{s_2}_p (\rn), \qquad 1<p<\infty, \quad s_2 < \tau^{n-}_p
\end{\eq}
and, with $H^\sigma (\rn) = B^\sigma_2 (\rn) = B^\sigma_{2,2} (\rn)$, $\sigma \in \real$, to
\begin{\eq}   \label{4.28}
X^{s_1}_2 (\rn) = H^{s_1} (\rn), \ s_1 >0 \quad \text{and} \quad Y^{s_2}_2 (\rn) = H^{s_2} (\rn), \ s_2 <0.
\end{\eq}
The degree of compactness will be measured in terms of entropy numbers as introduced in Definition \ref{D4.1}. Recall that $a_k \sim
b_k$ (equivalence) for two sets $\{ a_k: \, k\in \nat \}$ and $\{ b_k: \, k\in \nat \}$ of positive numbers means $0 < \inf_{k\in \nat}
a_k b^{-1}_k  \le \sup_{k\in \nat} a_k b^{-1}_k <\infty$. 

\begin{theorem}   \label{T4.8}
Let $1<p<\infty$ and let $\tau^{n+}_p$, $\tau^{n-}_p$, $d^n_p$ be as in \eqref{4.25}. Let $X^{s_1}_p (\rn)$ and $Y^{s_2}_p (\rn)$ be
the spaces as introduced in Definition \ref{D2.3}.
\cm
{\em (i)} Let $s_1 > \tau^{n+}_p$ and $s_2 < \tau^{n-}_p$. Then
\begin{\eq}    \label{4.29}
F: \quad X^{s_1}_p (\rn) \hra Y^{s_2}_p (\rn)
\end{\eq}
is compact.
\cm
{\em (ii)} Let $p=2$ and $s_1 >0>s_2$. Then
\begin{\eq}    \label{4.30}
F: \quad H^{s_1} (\rn) \hra H^{s_2} (\rn)
\end{\eq} 
is compact and
\begin{\eq}   \label{4.31}
e_k (F) \sim
\begin{cases}
k^{\frac{s_2}{n}} &\text{if $s_2 > -s_1$}, \\
\big( \frac{k}{\log k} \big)^{\frac{s_2}{n}} &\text{if $s_2 = -s_1$}, \\
k^{- \frac{s_1}{n}} &\text{if $s_2 < -s_1$},
\end{cases}
\end{\eq}
$2\le k \in \nat$.
\cm
{\em (iii)} Let $1<p<2$, $s_1 > d^n_p$ and $s_2 <0$ as indicated in Figure $1$. Then
\begin{\eq}   \label{4.32}
F: \quad B^{s_1}_p (\rn) \hra  B^{s_2}_p (\rn)
\end{\eq}
is compact and
\begin{\eq}    \label{4.33}
e_k (F) \le c
\begin{cases}
k^{\frac{s_2}{n}} &\text{if $s_2 > d^n_p -s_1$}, \\
\big( \frac{k}{\log k} \big)^{\frac{s_2}{n}} (\log k)^{\frac{1}{p} - \frac{1}{2}} &\text{if $s_2 = d^n_p -s_1$}, \cm
k^{- \frac{s_1}{n} + 2(\frac{1}{p} - \frac{1}{2})} &\text{if $s_2 < d^n_p -s_1$},
\end{cases}
\end{\eq}
for some $c>0$ and $2 \le k \in \nat$.
\cm
{\em (iv)} Let $2<p<\infty$, $s_1 >0$ and $s_2 <d^n_p$ as indicated in Figure $1$. Then
\begin{\eq}  \label{4.34}
F: \quad B^{s_1}_p (\rn) \hra B^{s_2}_p (\rn)
\end{\eq}
is compact and
\begin{\eq}    \label{4.35}
e_k (F) \le c
\begin{cases}
k^{\frac{s_2}{n} - 2(\frac{1}{p} - \frac{1}{2})} &\text{if $s_2 > d^n_p -s_1 $}, \\
\big( \frac{k}{\log k} \big)^{-\frac{s_1}{n}} (\log k)^{\frac{1}{2} - \frac{1}{p}} &\text{if $s_2 = d^n_p -s_1 $}, \cm
k^{- \frac{s_1}{n}} &\text{if $s_2 < d^n_p -s_1$},
\end{cases}
\end{\eq}
for some $c>0$ and $2 \le k \in \nat$.
\end{theorem}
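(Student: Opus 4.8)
The plan is to reduce part (i) to parts (ii)--(iv): under the hypotheses $s_1>\tau^{n+}_p$, $s_2<\tau^{n-}_p$ one always has $X^{s_1}_p(\rn)=B^{s_1}_p(\rn)$ and $Y^{s_2}_p(\rn)=B^{s_2}_p(\rn)$ (with $B^\sigma_2(\rn)=H^\sigma(\rn)$ when $p=2$), so it suffices to treat $p=2$, $1<p<2$, and $2<p<\infty$ in turn, and compactness follows in each case from $e_k(F)\to0$. In all three cases the strategy is the same: factor $F$ through a chain one of whose links is a Plancherel-type isomorphism between a weighted space $L_2(\rn,w_\sigma)$ and an $H^\sigma(\rn)$, and another of whose links is a compact embedding covered by Corollary \ref{C4.7} (equivalently Proposition \ref{P4.5}); one then uses the standard multiplicativity $e_k(S\circ T)\le\|S\|\,e_k(T)$ and $e_k(S\circ T)\le\|T\|\,e_k(S)$ of entropy numbers, the invariance of $e_k$ under composition with isomorphisms, and the identities $\|f\,|\,H^\sigma(\rn)\|\sim\|w_\sigma\wh f\,|\,L_2(\rn)\|$ from \eqref{4.19} and $F^2f=f(-\,\cdot\,)$.

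For part (ii) ($p=2$) I would argue that $F\colon H^{s_1}(\rn)\to L_2(\rn,w_{s_1})$, $f\mapsto\wh f$, is by \eqref{4.19} and $FL_2(\rn)=L_2(\rn)$ a topological isomorphism \emph{onto} $L_2(\rn,w_{s_1})$ (surjectivity uses $w_{s_1}\ge1$ since $s_1>0$, hence $L_2(\rn,w_{s_1})\subset L_2(\rn)=FL_2(\rn)$). Then $F\colon H^{s_1}(\rn)\hra H^{s_2}(\rn)$ factors as this isomorphism followed by the compact embedding $\id\colon L_2(\rn,w_{s_1})\hra H^{s_2}(\rn)$ of Corollary \ref{C4.7}(ii) (admissible since $\alpha=s_1>0$, $s=s_2<0$), so $e_k(F)\sim e_k(\id)$; inserting $\alpha=s_1$, $s=s_2$ into \eqref{4.24} gives exactly \eqref{4.31}, the trichotomy $-s$ against $\alpha$ turning into $s_2$ against $-s_1$.

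For parts (iii) and (iv) I would use two mutually dual factorizations. If $1<p<2$, set $\sigma_1=s_1-\frac12 d^n_p$; then $\sigma_1>\frac12 d^n_p>0$ and $F\colon B^{s_1}_p(\rn)\to B^{s_2}_p(\rn)$ factors as the bounded Sobolev embedding $B^{s_1}_p(\rn)\os{\id}{\hra}H^{\sigma_1}(\rn)$ (from \eqref{2.25}, using $p\le2$), then the isomorphism $F\colon H^{\sigma_1}(\rn)\to L_2(\rn,w_{\sigma_1})$ as in part (ii), then the compact embedding $\id\colon L_2(\rn,w_{\sigma_1})\hra B^{s_2}_p(\rn)$ of Corollary \ref{C4.7}(i); the requirements $0\le\frac1p<\frac12+\sigma_1/n$ and $\delta=n(\frac1p-\frac12)-s_2>0$ follow from $s_1>d^n_p>0>s_2$. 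Hence $e_k(F)\le c\,e_k(\id)$, and putting $\alpha=\sigma_1$, $\delta=\frac12 d^n_p-s_2$ into \eqref{4.22} reproduces the three cases of \eqref{4.33} (the borderline $\delta=\alpha$ being $s_2=d^n_p-s_1$). If $2<p<\infty$, set instead $\sigma_2=s_2-\frac12 d^n_p<\frac12 d^n_p<0$ and factor $F$ as $B^{s_1}_p(\rn)\os{\id}{\hra}L_2(\rn,w_{\sigma_2})\to H^{\sigma_2}(\rn)\os{\id}{\hra}B^{s_2}_p(\rn)$: the last link is the bounded Sobolev embedding \eqref{2.25} (using $p\ge2$); the middle map $f\mapsto\wh f$ is an isomorphism onto $H^{\sigma_2}(\rn)$, where now — since $\sigma_2<0$ — one uses $F^2f=f(-\,\cdot\,)$ and evenness of $w_{\sigma_2}$ to get $\|\wh f\,|\,H^{\sigma_2}(\rn)\|\sim\|w_{\sigma_2}f\,|\,L_2(\rn)\|=\|f\,|\,L_2(\rn,w_{\sigma_2})\|$; and the first link, transported by the weight isomorphism \eqref{4.6} ($f\mapsto w_{\sigma_2}f$), becomes $\id\colon B^{s_1}_p(\rn,w_{-\sigma_2})\hra L_2(\rn)$, which is compact by Proposition \ref{P4.5}. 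Checking \eqref{4.9} with $\alpha=-\sigma_2=\frac12 d^n_p-s_2$, $\delta=s_1-\frac12 d^n_p$ and reading off \eqref{4.10}--\eqref{4.14} then yields the three cases of \eqref{4.35}.

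The routine part is the parameter bookkeeping: verifying the admissibility conditions in Corollary \ref{C4.7}/Proposition \ref{P4.5} and matching the $\delta$-versus-$\alpha$ trichotomies with the $s_2$-versus-$(d^n_p-s_1)$ trichotomies, including the logarithmic factors in the equality cases. The points that need care — and which I expect to be the only real obstacles — are: (a) showing that the Plancherel-type maps land \emph{onto} the stated $H^\sigma(\rn)$ (for $\sigma<0$ this genuinely needs the reflection identity $F^2f=f(-\,\cdot\,)$, not a spurious ``$w_\sigma\ge1$'' argument); and (b) recognizing that in (iii)--(iv) only one-sided (upper) estimates can survive, because those factorizations contain a non-sharp Sobolev embedding link, whereas in (ii) the chain consists of an isomorphism and a single embedding, so the two-sided equivalence \eqref{4.31} is preserved — this is precisely why (ii) is stated with $\sim$ and (iii)--(iv) only with $\le c$.
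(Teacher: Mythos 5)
Your parts (i)--(iii) follow the paper's proof essentially verbatim: (i) by specializing to \eqref{4.26}--\eqref{4.28}, (ii) via the isomorphism $FH^{s_1}(\rn)=L_2(\rn,w_{s_1})$ from \eqref{4.19} composed with Corollary \ref{C4.7}(ii), and (iii) via the embedding of $B^{s_1}_p(\rn)$ into $H^{\alpha}(\rn)$ with $\alpha=s_1-n(\frac{1}{p}-\frac{1}{2})$ (the paper's \eqref{4.38}, \eqref{4.39}) followed by $FH^{\alpha}(\rn)=L_2(\rn,w_\alpha)$ and Corollary \ref{C4.7}(i); your bookkeeping $\delta=\frac{1}{2}d^n_p-s_2$, $\alpha=s_1-\frac{1}{2}d^n_p$ reproduces \eqref{4.33} including the logarithmic exponent. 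In part (iv) you take a genuinely different route. The paper deduces (iv) from (iii) by duality: $F'=F$, the duality $B^s_p(\rn)'=B^{-s}_{p'}(\rn)$ of \eqref{4.42}, and the Artstein--Milman--Szarek entropy-duality inequality $e_{lk}(T')\le c\,e_k(T)$, which is applicable because $B^s_p(\rn)$ is isomorphic to $\ell_p$. You instead factor $F$ directly as the compact embedding $B^{s_1}_p(\rn)\hra L_2(\rn,w_{\sigma_2})$, $\sigma_2=s_2-\frac{1}{2}d^n_p<0$ (transported by the weight isomorphism \eqref{4.6} to $B^{s_1}_p(\rn,w_{-\sigma_2})\hra L_2(\rn)$ and estimated by Proposition \ref{P4.5}), followed by the Plancherel-type isomorphism onto $H^{\sigma_2}(\rn)$ --- correctly justified for $\sigma_2<0$ through $F^2f=f(-\cdot)$ and the evenness of $w_{\sigma_2}$, where \eqref{4.19} alone does not suffice --- and the continuous embedding $H^{\sigma_2}(\rn)\hra B^{s_2}_p(\rn)$ from \eqref{2.25}; the admissibility conditions \eqref{4.9} amount exactly to $s_1>0$, $s_2<d^n_p$, and the three regimes \eqref{4.10}--\eqref{4.14} with $\delta=s_1-\frac{1}{2}d^n_p$, $\alpha=\frac{1}{2}d^n_p-s_2$, $\vr=\frac{s_1}{n}+\frac{1}{2}-\frac{1}{p}>0$ give precisely \eqref{4.35}. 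Your argument is more elementary and self-contained (no entropy duality, no $\ell_p$-isomorphism of Besov spaces) and treats (iii) and (iv) symmetrically; the paper's duality argument needs no second embedding computation, displays the self-dual structure of $F$, and would automatically transfer any future two-sided sharpening of \eqref{4.33} to \eqref{4.35}. Both arguments give only upper bounds in (iii)--(iv), consistent with Problem \ref{P5.1}, and your closing observation about why only (ii) yields an equivalence is sound.
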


\begin{proof}
{\em Step 1.} Part (i) is a by--product of the parts (ii)--(iv) and \eqref{4.26}--\eqref{4.28}.
\cm
{\em Step 2.} We prove part (ii). Let $L_2 (\rn, w_\alpha)$ be the weighted Hilbert space according to \eqref{4.2}, \eqref{4.3}. Let
$f\in H^{s_1} (\rn)$. Then one has by \eqref{4.19} and
\begin{\eq}   \label{4.36}
\| Ff \, | L_2 (\rn, w_{s_1}) \|  = \| w_{s_1} \wh{f} \, | L_2 (\rn) \|   = \| f \, | H^{s_1} (\rn) \|
\end{\eq}
that $Ff \in L_2 (\rn, w_{s_1})$.
Conversely if $g\in L_2 (\rn, w_{s_1})$ and $Ff =g$, then it follows from \eqref{4.36} that $f\in H^{s_1} (\rn)$. This shows that $F$
is an isomorphic mapping of $H^{s_1} (\rn)$ onto $L_2 (\rn, w_{s_1})$,
\begin{\eq}   \label{4.37}
F H^{s_1} (\rn) = L_2 (\rn, w_{s_1}).
\end{\eq}
Combined with Corollary \ref{C4.7}(ii) one obtains \eqref{4.31} for the mapping \eqref{4.30}.
\cm
{\em Step 3.} We prove part (iii). We rely on the limiting embedding
\begin{\eq}   \label{4.38}
\id: \quad B^{s_1}_p (\rn) \hra B^\alpha_{2,p} (\rn) \hra H^\alpha (\rn), \qquad 1<p<2,
\end{\eq}
with
\begin{\eq}   \label{4.39}
\alpha = s_1 - n \big( \frac{1}{p} - \frac{1}{2} \big) > n \big( \frac{1}{p} - \frac{1}{2} \big) >0.
\end{\eq}
Then one obtains from \eqref{4.37} (with $\alpha$ in place $s_1$) that
\begin{\eq}   \label{4.40}
F: \quad B^{s_1}_p (\rn) \hra L_2 (\rn, w_\alpha).
\end{\eq}
Application of part (i) of Corollary \ref{C4.7} with $s_2$ in place of $s$ and with $\alpha$ as \eqref{4.39} prove \eqref{4.33}.
\cm
{\em Step 4.} The proof of part (iv) relies on duality. Let $T: A \hra B$ be a linear and compact mapping from the complex Banach space
$A$ into the complex Banach space $B$ and let $e_k (T)$, $k\in \nat$, be the corresponding entropy numbers. Let $T': B' \hra A'$ be
the dual operator. Then there are numbers $l\in \nat$ and $c \ge 1$ such that
\begin{\eq}   \label{4.41}
e_{lk} (T') \le c\, e_k (T), \qquad k\in \nat,
\end{\eq}
if at least one of the two spaces $A$ and $B$ is isomorphic to $\ell_p$  with $1<p<\infty$. This remarkable assertion goes back to
\cite{AMS04} and \cite{AMST04} reformulated in \cite[pp.\,332--333]{Pie07} in the above way. It has already been observed in  
\cite[Theorem 2.11.2(b), p.\,237]{T78} that $B^s_p (\rn) = B^s_{p,p} (\rn)$, $1<p<\infty$, is isomorphic to $\ell_p$. It came out later
on  that this property remains valid for all $p$ with $0<p \le \infty$, \cite[Corollary 3.8, p.\,157]{T06}. It is also an immediate 
consequence of related wavelet isomorphisms, \cite[Theorem 1.20, pp.\,15--16]{T08}. Now one can rely part (iii) and the duality
\begin{\eq}   \label{4.42}
B^s_p (\rn)' = B^{-s}_{p'} (\rn), \qquad s\in \real, \quad 1 \le p<\infty, \quad \frac{1}{p} + \frac{1}{p'} =1,
\end{\eq}
within the dual pairing $\big( S(\rn), S'(\rn) \big)$
\cite[Theorem 2.11.2, p.\,178]{T83}. Let $1<p<2$ and let temporarily $s_1 > d^n_p$, $s_2 <0$ as in \eqref{4.32}, \eqref{4.33}. Let
$\sigma_1 = -s_2$ and $\sigma_2 = - s_1$. Then
\begin{\eq}   \label{4.43}
\sigma_1 >0 \quad \text{and} \quad \sigma_2 < -d^n_p = d^n_{p'}.
\end{\eq}
The Fourier transform $F$ is self--dual, $F' =F$, in the dual pairing $\big( S(\rn), S'(\rn) \big)$ and in its restrictions to the above
Banach spaces. Then it follows from the above duality for entropy numbers that for some $c>0$ and all $k\in \nat$,
\begin{\eq}   \label{4.44}
e_k \big(F: \ B^{\sigma_1}_{p'} (\rn) \hra B^{\sigma_2}_{p'} (\rn) \big) \le c \, e_k \big(F: B^{s_1}_p (\rn) \hra B^{s_2}_p (\rn)
\big).
\end{\eq}
Using \eqref{4.33} one obtains \eqref{4.35} with $\sigma_1$, $\sigma_2$, $p'$ in place of $s_1$, $s_2$, $p$. Reformulation results in
the desired assertion.
\end{proof}

\begin{remark}    \label{R4.9}
Step 3 of the above proof shows that part (iii) of the above theorem remains valid if one extends $1<p<2$ to $0<p<2$. But we preferred
$1<p<2$ because this restriction is consistent with the above set--up in \eqref{4.26}, \eqref{4.27} and \eqref{4.29}.
\end{remark}

We are mainly interested in the interplay of already available  ingredients. This may justify to touch briefly on interpolation. We assume that the reader is familiar with basic assertions of interpolation theory. Let $A$ be a quasi--Banach space  and let $\{B_1, 
B_2 \}$ be an interpolation couple  of quasi--Banach spaces. Let both $T: A \hra B_1$ and $T: A \hra B_2$ be linear and compact. Then
\begin{\eq}   \label{4.45}
T: \quad A \hra B_{\theta,q} = (B_1, B_2 )_{\theta, q}, \qquad 0< \theta <1, \quad 0<q \le \infty,
\end{\eq}
is compact and one has
\begin{\eq}   \label{4.46}
e_{k_1 + k_2 -1} (T: A\hra B_{\theta,q} ) \le c\, e_{k_1} (T: A \hra B_1 )^{1-\theta} e_{k_2} (T: A \hra B_2 )^\theta,
\end{\eq}
for some $c>0$ and all $k_1 \in \nat$, $k_2 \in \nat$ for the related entropy numbers. Let $\{A_1, A_2 \}$ be an interpolation couple
of quasi--Banach spaces and let $B$ be a quasi--Banach space. Let both $T: A_1 \hra B$ and $T: A_2 \hra B$ be linear and compact. Then
\begin{\eq}   \label{4.47}
T: \quad (A_1, A_2)_{\theta,q} = A_{\theta,q} \hra B, \qquad 0<\theta <1, \quad 0<q \le \infty,
\end{\eq}
is compact and one has
\begin{\eq}   \label{4.48}
e_{k_1 + k_2 -1} (T: A_{\theta,q}\hra B) \le c\, e_{k_1} (T: A_1 \hra B)^{1-\theta} e_{k_2} (T: A_2 \hra B)^\theta,
\end{\eq}
for some $c>0$ and all $k_1 \in \nat$, $k_2 \in \nat$ for the related entropy numbers. The short proof of these assertions in 
\cite[Section 1.3.2, pp.\,13--15]{ET96} goes back to \cite{HaT94a}. An extension of this interpolation property to $T: A_{\theta,q}
\hra B_{\theta,q}$ for related interpolation couples $\{A_1, A_2 \}$ and $\{B_1, B_2 \}$ is not possible in general. This problem was 
open for a long time and had been finally settled in \cite{EdN11} and \cite{EdN13}. This shows that that the qualitative so--called
one--sided compact interpolation mentioned briefly in Remark \ref{R2.6} has no quantitative counterpart in terms of entropy numbers.  But the assertions \eqref{4.46} and \eqref{4.48} are sufficient  for our purpose, based on the following well-known real
interpolation. Let $-\infty <s_1 <s_2 <\infty$ and $0<p, q_1, q_2, q \le \infty$. Let $0<\theta <1$ and $s= (1-\theta)s_1 + \theta s_2.
$ Then
\begin{\eq}   \label{4.49}
\big( B^{s_1}_{p, q_1} (\rn), B^{s_2}_{p, q_2} (\rn) \big)_{\theta,q} = \Bs (\rn),
\end{\eq}
\cite[Theorem 2.4.2, p.\,64]{T83} extending \cite[Theorem 2.4.1, p.\,182]{T78} from $1<p<\infty$ and $1\le q_1, q_2, q \le \infty$ to
the above parameters. Let again $d^n_p = 2n \big( \frac{1}{p} - \frac{1}{2} \big)$ for $n\in \nat$ and $0<p \le \infty$.

\begin{corollary}   \label{C4.10}
{\em (i)} Let $1<p \le 2$, $s_1 > d^n_p$ and $s_2 <0$. Let $0 <q_1, q_2 \le \infty$. Then
\begin{\eq}  \label{4.50}
F: \quad B^{s_1}_{p, q_1} (\rn) \hra B^{s_2}_{p, q_2} (\rn)
\end{\eq}
is compact and
\begin{\eq}    \label{4.51}
e_k (F) \le c
\begin{cases}
k^{\frac{s_2}{n}} &\text{if $s_2 > d^n_p-s_1$}, \\
k^{- \frac{s_1}{n} + 2(\frac{1}{p} - \frac{1}{2})} &\text{if $s_2 < d^n_p-s_1$},
\end{cases}
\end{\eq}
for some $c>0$ and all $k\in \nat$.
\cm
{\em (ii)} Let $2\le p<\infty$, $s_1 >0$, $s_2 <d^n_p$. Let $0<q_1, q_2 \le \infty$. Then
\begin{\eq}  \label{4.52}
F: \quad B^{s_1}_{p, q_1} (\rn) \hra B^{s_2}_{p,q_2} (\rn)
\end{\eq}
is compact and
\begin{\eq}    \label{4.53}
e_k (F) \le c
\begin{cases}
k^{\frac{s_2}{n} - 2(\frac{1}{p} - \frac{1}{2})} &\text{if $s_2 > d^n_p-s_1$}, \\
k^{- \frac{s_1}{n}} &\text{if $s_2 < d^n_p-s_1$},
\end{cases}
\end{\eq}
for some $c>0$ and all $k \in \nat$.
\end{corollary}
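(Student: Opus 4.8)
The plan is to derive the corollary from Theorem~\ref{T4.8} by two successive uses of the real interpolation inequalities \eqref{4.46} and \eqref{4.48} for entropy numbers, exploiting the real interpolation formula \eqref{4.49} to write a Besov space $B^s_{p,q}(\rn)$ with arbitrary fine index $q$ as a real interpolation space of two spaces $B^{\sigma}_{p,p}(\rn)=B^\sigma_p(\rn)$ already settled. The base cases are Theorem~\ref{T4.8}(iii) when $1<p<2$, Theorem~\ref{T4.8}(iv) when $2<p<\infty$, and Theorem~\ref{T4.8}(ii) when $p=2$; I describe part~(i), part~(ii) being completely parallel.

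\emph{First step: adjusting the source fine index.} Fix a target exponent $\sigma<0$ with $\sigma\ne d^n_p-s_1$ and pick $\tilde s_1,\hat s_1$ with $d^n_p<\tilde s_1<s_1<\hat s_1$ and $s_1=(1-\theta)\tilde s_1+\theta\hat s_1$; by \eqref{4.49}, $(B^{\tilde s_1}_p(\rn),B^{\hat s_1}_p(\rn))_{\theta,q_1}=B^{s_1}_{p,q_1}(\rn)$. If $\sigma>d^n_p-s_1$ one may moreover arrange $\tilde s_1>d^n_p-\sigma$ (hence $\hat s_1>d^n_p-\sigma$ too), so that Theorem~\ref{T4.8}(iii) yields $e_k(F:B^{\tau}_p(\rn)\hra B^{\sigma}_p(\rn))\le c\,k^{\sigma/n}$ for $\tau\in\{\tilde s_1,\hat s_1\}$; if instead $\sigma<d^n_p-s_1$ one arranges $\hat s_1<d^n_p-\sigma$ (hence $\tilde s_1<d^n_p-\sigma$), and Theorem~\ref{T4.8}(iii) yields $e_k(F:B^{\tau}_p(\rn)\hra B^{\sigma}_p(\rn))\le c\,k^{-\tau/n+2(1/p-1/2)}$. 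In either case \eqref{4.48} with $k_1=k_2=k$, together with $(1-\theta)\tilde s_1+\theta\hat s_1=s_1$, gives $e_{2k-1}(F:B^{s_1}_{p,q_1}(\rn)\hra B^{\sigma}_p(\rn))\le c\,k^{\sigma/n}$ resp. $\le c\,k^{-s_1/n+2(1/p-1/2)}$; since these exponents are negative the same bound holds for $e_k$. Thus $F:B^{s_1}_{p,q_1}(\rn)\hra B^{\sigma}_p(\rn)$ is bounded and compact with these estimates, for every $\sigma<0$ with $\sigma\ne d^n_p-s_1$.

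\emph{Second step: adjusting the target fine index.} Keeping the source $A=B^{s_1}_{p,q_1}(\rn)$ fixed, write $B^{s_2}_{p,q_2}(\rn)=(B^{\tilde s_2}_p(\rn),B^{\hat s_2}_p(\rn))_{\theta,q_2}$ via \eqref{4.49}, with $\tilde s_2<s_2<\hat s_2<0$ and $s_2=(1-\theta)\tilde s_2+\theta\hat s_2$. If $s_2>d^n_p-s_1$, choose the endpoints so that $d^n_p-s_1<\tilde s_2$; the first step then gives $e_k(F:A\hra B^{\tau}_p(\rn))\le c\,k^{\tau/n}$ for $\tau\in\{\tilde s_2,\hat s_2\}$, and \eqref{4.46} with $k_1=k_2=k$ gives $e_k(F:A\hra B^{s_2}_{p,q_2}(\rn))\le c\,k^{s_2/n}$. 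If $s_2<d^n_p-s_1$, choose the endpoints so that $\hat s_2<d^n_p-s_1$ (possible since $s_2<d^n_p-s_1<0$); the first step then gives the common bound $c\,k^{-s_1/n+2(1/p-1/2)}$ at both endpoints, and \eqref{4.46} gives $e_k(F:A\hra B^{s_2}_{p,q_2}(\rn))\le c\,k^{-s_1/n+2(1/p-1/2)}$. Both exponents being negative, $e_k(F)\to0$ and $F$ is compact by Remark~\ref{R4.2} (compactness is anyway ensured by \eqref{4.45} and \eqref{4.47}). Part~(ii) is proved identically, with $d^n_p\le0$, Theorem~\ref{T4.8}(iv) (or (ii) when $p=2$) as base case, and the endpoint exponents replaced by $s_2/n-2(1/p-1/2)$ and $-s_1/n$.

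The one point demanding care is the placement of the four auxiliary smoothness parameters: each of $\tilde s_1,\hat s_1,\tilde s_2,\hat s_2$ must be kept strictly on the correct side of the relevant kink $d^n_p-s_j$, so that Theorem~\ref{T4.8}(iii)/(iv) returns a homogeneous power rate and not a logarithmic boundary rate. The strict hypotheses $s_2\ne d^n_p-s_1$ and $s_1>\tau^{n+}_p$ are exactly what make all the required choices of endpoints available; the final step $e_{2k-1}\le c\,k^{-a}\Rightarrow e_k\le c\,k^{-a}$ only uses $a>0$.
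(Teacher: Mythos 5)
Your argument is correct and is essentially the paper's own proof: both derive the corollary from Theorem \ref{T4.8} by two successive applications of the one-sided entropy-number interpolation inequalities \eqref{4.46}, \eqref{4.48} together with the real interpolation formula \eqref{4.49}, keeping the interpolated smoothness parameters strictly off the diagonal $s_1+s_2=d^n_p$. The only (immaterial) difference is the order of the two steps — you adjust the source fine index first and then the target, while the paper first interpolates the targets for fixed $B^{s_1}_p(\rn)$ and then the sources.
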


\begin{proof} Let, say, $1<p\le 2$ and $A = B^{s_1}_p (\rn)$ with $s_1 > d^n_p$. Choosing for $B_1$ and $B_2$ two different spaces
$B^{s_2}_{p_2} (\rn)$ with, say, $0>s_2 > d^n_p -s_1$, then it follows from \eqref{4.33} and the interpolation \eqref{4.45}, 
\eqref{4.46}, based on \eqref{4.49}, that
\begin{\eq}    \label{4.54}
e_k \big( F: \ B^{s_1}_p (\rn) \hra B^{s_2}_{p, q_2} (\rn) \big) \le c\, k^{\frac{s_2}{n}}, \qquad k \in \nat,
\end{\eq}
$0<q_2 \le \infty$. For fixed $B^{s_2}_{p, q_2} (\rn)$ one obtains by the same type of argument, now based on \eqref{4.48}, that
\begin{\eq}   \label{4.55}
e_k \big(F: \ B^{s_1}_{p,q_1} (\rn) \hra B^{s_2}_{p, q_2} (\rn) \big) \le c \, k^{\frac{s_2}{n}}, \qquad k \in \nat.
\end{\eq}
The other cases can be treated similarly.
\end{proof}

\begin{remark}   \label{R4.11}
The well--known embedding
\begin{\eq}   \label{4.56}
B^s_{p, \min(p,q)} (\rn) \hra \Fs (\rn) \hra B^s_{p, \max(p,q)} (\rn),
\end{\eq}
$0<p<\infty$, $0<q \le \infty$, $s\in \real$, shows that one can replace $B^{s_r}_{p, q_r} (\rn)$ in the above corollary by 
$F^{s_r}_{p, q_r} (\rn)$ with $r=1$ and $r=2$. This applies in particular to the fractional Sobolev spaces
\begin{\eq}   \label{4.57}
H^s_p (\rn) = F^s_{p,2} (\rn), \qquad 1<p<\infty, \quad s\in \real.
\end{\eq}
The resulting  estimate for the entropy numbers of
\begin{\eq}   \label{4.58}
F: \quad H^{s_1}_p (\rn) \hra H^{s_2}_p (\rn)
\end{\eq}
in \eqref{4.51} and \eqref{4.53} may be considered as a complement to \eqref{2.36} going back to \cite{Ryd20}. If $p=2$ then one has
the equivalence \eqref{4.31}.
\end{remark}

\section{Some problems}    \label{S5}
We list some problems which might be of interest for further  research.

\begin{problem}   \label{P5.1}
If $p=2$ then one has for the mapping \eqref{4.29} the equivalence \eqref{4.31}. The problem arises to show that \eqref{4.33} and
\eqref{4.35} are also equivalences.
\end{problem}

\begin{problem}   \label{P5.2}
The choice $X^0_p (\rn) = L_p (\rn)$ if $2 \le p <\infty$ and $Y^0_p (\rn) = L_p (\rn)$ if $1<p \le 2$ in Definition \ref{D2.3}
is natural. Theorem \ref{T3.2}
and Corollary \ref{C3.3} justify that we called these spaces in Definition \ref{D2.3}(ii) limiting. The question arises whether one can
replace $L_p (\rn)$, $1<p<\infty$, in Theorem \ref{T3.2} by $B^0_p (\rn)$ or $B^0_{p,q} (\rn)$ including related complements in
Corollary \ref{C3.3}.
\end{problem}

\begin{problem}   \label{P5.3}
In contrast to Theorem \ref{T4.8} we had to exclude in Corollary \ref{C4.10}  the case $s_1 + s_2 =d^n_p$. One may ask what happens
in this limiting situation, expecting that $q_1$ and $q_2$ come in (taking \eqref{4.13}, \eqref{4.14} as a guide).
\end{problem}

\begin{problem}   \label{P5.4}
The study  of mapping properties of the Fourier transform in unweighted spaces $B^s_p (\rn)$, $\Bs (\rn)$, $H^s_p (\rn)$ and also
$\Fs (\rn)$ is quite natural. It is the first choice (beyond the classical assertions \eqref{1.2} and \eqref{1.3}). But our approach
uses already the weighted spaces $\Bs (\rn, w_\alpha )$ as introduced in Definition \ref{D4.3}. In addition one has for fixed $0<p,q \le \infty$ that
\begin{\eq}   \label{5.1}
S(\rn) = \bigcap_{\alpha \in \real, s\in \real} \Bs (\rn, w_\alpha) \quad \text{and} \quad 
S'(\rn) = \bigcup_{\alpha \in \real, s\in \real} \Bs (\rn, w_\alpha).
\end{\eq}
This is more or less known
and may also be found in \cite[(2.281), p.\,74]{T20} with a reference to \cite{Kab08} for a detailed proof. These observations  
suggest strongly to use these weighted spaces  not only as a tool (in connection with entropy numbers) but as natural spaces to study
mapping properties of Fourier transforms between them.
\end{problem}

\begin{problem}   \label{P5.5}
Instead of entropy numbers to measure the compactness  of mappings of the Fourier transform between suitable function spaces one may
rely on other distinguished numbers, above all approximation numbers. The underlying assertions in terms of the spaces $\Bs (\rn,
w_\alpha)$ have been mentioned in \cite[Section 4.3.3, pp.\,179--184]{ET96} with a reference to \cite{Har95}. This has been 
complemented in \cite{Cae98} and \cite{Skr05}.
\end{problem}

\begin{problem}   \label{P5.6}
We discussed limiting situations only if they illuminate what is going on, not for their own sake. Nevertheless some of them might be of interest. Whereas the limiting case $p=1$ is largely covered by Remark \ref{R4.9} the situation is different if $p= \infty$. It follows from the duality
\eqref{4.42} and Theorem \ref{T4.8}(iii) extended to $p=1$ that
\begin{\eq}  \label{5.2}
F: \quad \Cc^{s_1} (\rn) \hra \Cc^{s_2} (\rn) \qquad \text{with} \quad s_2 +n <0 < s_1
\end{\eq}
is compact, but it is not clear whether the estimate \eqref{4.35} for the related entropy numbers  can be extended to $p=\infty$ with the
excepted outcome 
\begin{\eq}    \label{5.3}
e_k (F) \le c
\begin{cases}
k^{\frac{s_2}{n} +1} &\text{if $s_1+s_2+n >0$}, \\
\big( \frac{k}{\log k} \big)^{-\frac{s_1}{n}} \sqrt{\log k} &\text{if $s_1 +s_2 +n =0 $}, \cm
k^{- \frac{s_1}{n}} &\text{if $s_1 +s_2 +n <0$},
\end{cases}
\end{\eq}
for some $c>0$ and $2 \le k \in \nat$, or even equivalence. 
\end{problem}
{\bfseries Acknowledgement.} I would like to thank Dorothee D. Haroske for producing  the figures.

\end{document}